\numberwithin{equation}{section}
\newcommand{\R}{\mathbb{R}}
\newcommand{\N}{\mathbb{N}}
\newcommand{\Z}{\mathbb{Z}}
\newcommand{\J}{\mathcal{J}}
\newcommand{\I}{\mathcal{I}}
\newcommand{\A}{\mathcal{S}_{2\pi}}
\newcommand{\e}{\varepsilon}
\newcommand{\2}{2^{*}_{s}}
\newcommand{\X}{\mathbb{X}^{s}_{2\pi}}
\newcommand{\h}{\mathbb{H}^{s}_{2\pi}}
\newcommand{\T}{\textup{Tr}}
\newcommand{\ri}{\rightarrow}
\DeclareMathOperator{\dive}{div}
\DeclareMathOperator{\supp}{supp}
\newtheorem{lem}{Lemma}
\newtheorem{prop}{Proposition}
\newtheorem{thm}{Theorem}
\newtheorem{defn}{Definition} 
\theoremstyle{remark}
\newtheorem{remark}{Remark}
\title[Periodic solutions for a class of fractional Kirchhoff problems]{Infinitely many periodic solutions for a class of fractional Kirchhoff problems}
\author[V. Ambrosio]{Vincenzo Ambrosio}
\address{Vincenzo Ambrosio\hfill\break\indent 
Dipartimento di Ingegneria Industriale e Scienze Matematiche \hfill\break\indent
Universit\`a Politecnica delle Marche\hfill\break\indent
Via Brecce Bianche, 12\hfill\break\indent
60131 Ancona (Italy)}
\email{v.ambrosio@univpm.it}
\keywords{periodic solutions; fractional Kirchhoff equation; variational methods; critical exponent}
\subjclass[2010]{34K13, 35R11, 35A15, 35B33}
\begin{document}

\begin{abstract}
We prove the existence of infinitely many nontrivial weak periodic solutions for a class of fractional Kirchhoff problems driven by a relativistic Schr\"odinger operator with periodic boundary conditions and involving different types of nonlinearities.  
\end{abstract}

\maketitle

\section{\bf Introduction}

\noindent
In this paper we deal with the existence of infinitely many nontrivial weak periodic solutions for the following class of fractional Kirchhoff problems:
\begin{equation}\label{P}
\left\{
\begin{array}{ll}
(a+b|u|^{2}_{\h})(-\Delta+m^{2})^{s}u = \lambda |u|^{q-2}u+|u|^{p-2}u \, &\mbox{ in } \, (-\pi, \pi)^{3},\\
u(x+2\pi e_{i})=u(x) \, &\mbox{ for all } \, x\in\R^{3}, i=1, 2,3
\end{array}
\right.
\end{equation}
where $a, b, m>0$ are real positive numbers, $\{e_{i}\}_{i=1,2,3}$ is the canonical basis in $\R^{3}$, $\lambda>0$ is a parameter, 
$s\in (3/4, 1)$, and $1<q<2<p\leq \2$, where $\2=\frac{6}{3-2s}$ is the fractional critical exponent.
The fractional Schr\"odinger operator $(-\Delta+m^{2})^{s}$ is a nonlocal operator which can be defined for any $u=\sum_{k\in \Z^{3}} c_{k} \frac{e^{\imath k\cdot x}}{(2\pi)^{\frac{3}{2}}}\in \mathcal{C}^{\infty}_{2\pi}(\R^{3})$, that is $u$ is infinitely differentiable in $\R^{3}$ and $2\pi$-periodic in each variable, by 
\begin{equation}\label{nfrls}
(-\Delta+m^{2})^{s} u(x)=\sum_{k\in \Z^{3}} c_{k} (|k|^{2}+m^{2})^{s} \, \frac{e^{\imath k\cdot x}}{(2\pi)^{\frac{3}{2}}}
\end{equation}
where
$$
c_{k}:=\frac{1}{(2\pi)^{\frac{3}{2}}} \int_{(-\pi,\pi)^{3}} u(x)e^{- \imath k \cdot x}dx \quad (k\in \Z^{3})
$$
are the Fourier coefficients of the function $u$.
This operator can be extended by density on the Hilbert space
$$
\h:=\Bigl\{u=\sum_{k\in \Z^{3}} c_{k} \frac{e^{\imath k\cdot x}}{(2\pi)^{\frac{3}{2}}}\in L^{2}(-\pi,\pi)^{3}: \sum_{k\in \Z^{3}} (|k|^{2}+m^{2})^{s} \, |c_{k}|^{2}<+\infty \Bigr\}
$$
endowed with the norm
$
|u|_{\h}:=\left(\sum_{k\in \Z^{3}} (|k|^{2}+m^{2})^{s} |c_{k}|^{2}\right)^{1/2};
$
see \cite{A2, A3}.
\smallskip

When $m=0$, the operator in (\ref{nfrls}) appears in the study of quasi-geostrophic equations; see \cite{CC, KNV}. 
In $\R^{N}$, one of the main reasons of studying the operator $(-\Delta+m^{2})^{s}$ is related to its physical meaning.
Indeed, when $s= {1}/{2}$, the operator $(-\Delta+m^{2})^{{1}/{2}}-m$ corresponds to the free Hamiltonian of a relativistic particle with mass $m$; see \cite{LL}.
There is also a deep connection between $-[(-\Delta+m^{2})^{s}-m^{2s}]$ and the theory of L\'{e}vy processes: such operator is the infinitesimal generator of a relativistic $2s$-stable process $\{X^{m}_{t}\}_{t \geq 0}$, that is a L\'{e}vy process with characteristic function given by
\begin{equation*}\label{carfun}
\mathbb{E}(e^{i \xi \cdot X^{m}_{t}}) := e^{-t [(m^{2}+|\xi|^{2})^{s}-m^{2s}]}  \quad (\xi \in \R^{N});
\end{equation*}
see \cite{Bog} for more details. 
\noindent
More generally, fractional and nonlocal operators have achieved tremendous popularity in these years both pure and applied mathematical research. In fact, these operators arise in several concrete real-world applications such as phase transitions, flames propagation, chemical reaction in liquids, population dynamics, American options in finance, crystal dislocation. For more details and applications on this topic we refer to \cite{DPV, MBRS}.

On the other hand, if we take $s=1$ and we replace $\lambda |u|^{q-2}u+|u|^{\2-2}u$ by a more general nonlinearity $h(x, u)$, $(-\pi, \pi)^{3}$ by a bounded open set $\Omega$ and periodic boundary conditions by homogeneous Dirichlet boundary conditions, then \eqref{P} boils down to the well-known classical Kirchhoff equation 
\begin{equation}\label{SKE}
\left\{
\begin{array}{ll}
-\left(a+b\int_{\Omega} |\nabla u|^{2}dx \right)\Delta u=h(x,u) & \mbox{ in } \Omega, \\
u=0 & \mbox{ on } \partial\Omega, 
\end{array}
\right.
\end{equation}
which is related to the stationary analogue of the Kirchhoff equation
\begin{equation}\label{KE}
\rho u_{tt} - \left( \frac{p_{0}}{h}+ \frac{E}{2L}\int_{0}^{L} |u_{x}|^{2} dx \right) u_{xx} =0,
\end{equation}
introduced by Kirchhoff \cite{Kir} in $1883$ as an extension of the classical D'Alembert's wave equation for describing  the transversal oscillations of a stretched string. The parameters in equation \eqref{KE} have the following meanings:  $L$ is the length of the string, $h$ is the area of the cross-section, $E$ is the young modulus (elastic modulus) of the material, $\rho$ is the mass density, and $p_{0}$ is the initial tension.
We refer to \cite{B, P} for the early classical studies dedicated to \eqref{KE}. We also note that nonlocal boundary value problems like \eqref{SKE} model several physical and biological systems, where $u$ describes a process which depends on the average of itself, as for example, the population density; see \cite{ACM, ChL}.
However, only after the Lions' work \cite{LionsK}, where a functional analysis approach was given to attack a general Kirchhoff equation in arbitrary dimension with external force term, problem \eqref{SKE} began to catch the attention of several mathematicians; see \cite{ACF, ACM, AF, FJ, HLP, LLT, PZ} and the references therein. We would like to mention \cite{H, HT} for some results concerning Kirchhoff problems in closed manifolds.

In the nonlocal framework, Fiscella and Valdinoci \cite{FV}  proposed for the first time a stationary fractional Kirchhoff variational model in a bounded domain $\Omega\subset \R^{N}$ with homogeneous Dirichlet boundary conditions and involving a critical nonlinearity:
\begin{align}\label{FKE}
\left\{
\begin{array}{ll}
M\left(\int_{\R^{N}}|(-\Delta)^{\frac{s}{2}}u|^{2}dx\right)(-\Delta)^{s}u=\lambda f(x, u)+|u|^{\2-2}u \quad &\mbox{ in } \Omega,\\
u=0 &\mbox{ in } \R^{N}\setminus \Omega, 
\end{array}
\right. 
\end{align}
where $(-\Delta)^{s}$ is the fractional Laplacian in $\R^{N}$, $M$ is a continuous Kirchhoff function whose model case is given by $M(t)=a+bt$, $f$ is a continuous subcritical nonlinearity and $\lambda$ is a parameter.
In their correction of the early (one-dimensional) model, the tension on the string, which classically has a ``nonlocal'' nature arising from the average of the kinetic energy $\frac{|u_{x}|^{2}}{2}$ on $[0, L]$, possesses a further nonlocal behavior provided by the $H^{s}$-norm (or other more general fractional norms) of the function $u$; see \cite{FV} for more details.
After the pioneering work \cite{FV}, several existence and multiplicity have been established by different authors; see  \cite{AI1, AI2, AFP, FMBS, Ny, PXZ} and the references therein. We note that the Kirchhoff problems considered in literature are set in $\R^{N}$ or in bounded domains with homogeneous boundary conditions.
On the other hand, in these years the existence and multiplicity of periodic solutions for subcritical and critical problems have been studied in \cite{A2, A3, A4, A5, AMMB} and the references therein.

Motivated by the interest shared by the mathematical community towards fractional Kirchhoff problems and periodic solutions for elliptic equations, the goal of this paper is to investigate the existence of nontrivial periodic solutions for fractional Kirchhoff problems with subcritical or critical growth. 
Now, we state our main results concerning \eqref{P}.
\begin{thm}\label{thm1}
Let $2<p<4$. Then, problem \eqref{P} admits infinitely many periodic solutions, for all $\lambda>0$.   
\end{thm}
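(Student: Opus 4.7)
The strategy is a direct variational attack via the symmetric critical point theory for even, bounded-below functionals. Since $2<p<4<2^{*}_{s}$, the nonlinearity is subcritical, the associated energy is coercive, and the sublinear term $|u|^{q-2}u$ (with $1<q<2$) will generate arbitrarily many small negative critical levels through a genus argument (Clark's theorem, or equivalently Kajikiya's symmetric version).

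First, I would set up the variational framework on $\h$ and define
\[
J(u)=\frac{a}{2}|u|_{\h}^{2}+\frac{b}{4}|u|_{\h}^{4}-\frac{\lambda}{q}\int_{(-\pi,\pi)^{3}}|u|^{q}\,dx-\frac{1}{p}\int_{(-\pi,\pi)^{3}}|u|^{p}\,dx.
\]
Using the compact Sobolev embedding $\h\hookrightarrow L^{r}(-\pi,\pi)^{3}$ for $r\in[1,\2)$ (available from \cite{A2,A3}), one checks in a routine way that $J\in C^{1}(\h,\R)$ with
\[
\langle J'(u),\varphi\rangle=(a+b|u|_{\h}^{2})\langle u,\varphi\rangle_{\h}-\lambda\int |u|^{q-2}u\varphi\,dx-\int |u|^{p-2}u\varphi\,dx,
\]
so critical points of $J$ correspond to weak periodic solutions of \eqref{P}. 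The functional is even and $J(0)=0$.

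Next, I would establish two structural properties. \textbf{Coercivity and boundedness from below:} from $q<2<p<4$ and the embeddings $|u|_{q},|u|_{p}\leq C|u|_{\h}$, the quartic term $\tfrac{b}{4}|u|_{\h}^{4}$ dominates both nonlinear parts, giving $J(u)\to +\infty$ as $|u|_{\h}\to+\infty$ and $\inf_{\h}J>-\infty$. \textbf{Palais--Smale condition:} if $(u_{n})\subset\h$ with $J(u_{n})\to c$ and $J'(u_{n})\to 0$, coercivity forces $(u_{n})$ to be bounded in $\h$; up to a subsequence $u_{n}\rightharpoonup u$ weakly in $\h$ and, thanks to the compact embeddings (valid since $p<\2$), strongly in $L^{p}$ and $L^{q}$. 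A standard manipulation with $\langle J'(u_{n})-J'(u),u_{n}-u\rangle\to 0$, combined with the fact that the map $u\mapsto (a+b|u|_{\h}^{2})u$ has a strongly monotone structure in $\h$, yields $u_{n}\to u$ in $\h$. This is the technically most delicate step because the Kirchhoff coefficient $a+b|u_{n}|_{\h}^{2}$ couples with the limit norm; one has to argue separately that $|u_{n}|_{\h}\to |u|_{\h}$ using the PS information together with the strong $L^{p}$, $L^{q}$ convergence.

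Finally, I would verify the geometric hypothesis of Clark's theorem. Since $\h$ admits a Hilbert basis of eigenfunctions $\{e^{\imath k\cdot x}/(2\pi)^{3/2}\}_{k\in\Z^{3}}$, for every $k\in\N$ I can pick a $k$-dimensional subspace $E_{k}\subset\h$. All norms on $E_{k}$ are equivalent, so there exists $c_{k}>0$ with $|u|_{q}\geq c_{k}|u|_{\h}$ for $u\in E_{k}$, and therefore
\[
J(u)\leq \frac{a}{2}|u|_{\h}^{2}+\frac{b}{4}|u|_{\h}^{4}-\frac{\lambda c_{k}^{q}}{q}|u|_{\h}^{q}.
\]
Because $q<2$, the right-hand side is strictly negative on a small sphere $\{|u|_{\h}=\rho_{k}\}\subset E_{k}$ for some $\rho_{k}>0$ (independent of the chosen representative, up to shrinking). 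Clark's theorem (or Kajikiya's symmetric mountain pass) then produces a sequence of pairs $\pm u_{k}$ of critical points with $J(u_{k})<0$ and $J(u_{k})\to 0^{-}$, giving infinitely many distinct nontrivial periodic solutions of \eqref{P} for every $\lambda>0$.

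The main obstacle I anticipate is the Palais--Smale verification: ensuring the norm convergence $|u_{n}|_{\h}\to |u|_{\h}$ in the presence of the Kirchhoff coefficient, since weak convergence alone is insufficient to pass to the limit in the nonlocal factor $(a+b|u_{n}|_{\h}^{2})$. The remaining pieces (coercivity, geometry via the sublinear term, abstract application of Clark) are standard once the compact embedding and the PS property are in hand.
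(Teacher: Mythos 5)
Your plan is essentially the same as the paper's proof: coercivity and boundedness from below via the dominating quartic Kirchhoff term, verification of the Palais--Smale condition using compact embeddings, a genus estimate on small spheres in finite-dimensional subspaces exploiting the sublinear $q<2$ term, and Clark's theorem to produce infinitely many pairs of critical points. The only cosmetic difference is that you work directly on $\h$, whereas the paper sets up the equivalent functional on the extension space $\X$ (which it needs anyway for the critical case), and your worry about the Palais--Smale step is resolved more simply than you anticipate: since a bounded PS sequence gives $a\leq a+b|u_n|_{\h}^{2}\leq a+bC$, one just divides by this uniformly bounded coefficient in the relation $\langle J'(u_n),u_n-u\rangle\to 0$ to obtain $\langle u_n,u_n-u\rangle_{\h}\to 0$, and combined with weak convergence (so $\langle u,u_n-u\rangle_{\h}\to0$) this yields $|u_n-u|_{\h}\to 0$ in the Hilbert space $\h$, with no monotonicity argument required.
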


\begin{thm}\label{thm2}
Let $4<p<\2$. Then, there exists $\lambda_{*}>0$ such that, for all $\lambda\in (0, \lambda_{*})$,  problem \eqref{P} has infinitely many periodic solutions.   
\end{thm}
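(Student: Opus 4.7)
The plan is to invoke the Fountain theorem on the even $C^{1}$ Euler--Lagrange functional associated with \eqref{P},
\begin{equation*}
J_{\lambda}(u)=\frac{a}{2}|u|_{\h}^{2}+\frac{b}{4}|u|_{\h}^{4}-\frac{\lambda}{q}\int_{(-\pi,\pi)^{3}}|u|^{q}\,dx-\frac{1}{p}\int_{(-\pi,\pi)^{3}}|u|^{p}\,dx,\qquad u\in\h.
\end{equation*}
Since $p<\2$ under the assumptions of Theorem \ref{thm2}, the embedding $\h\hookrightarrow L^{r}((-\pi,\pi)^{3})$ is compact for every $r\in[1,\2)$, and this will be used both in the compactness analysis for Palais--Smale sequences and in estimating the concave/convex nonlinear terms on high-frequency subspaces.

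First I would verify the Palais--Smale condition at every level $c\in\R$. Given a $(\mathrm{PS})_{c}$ sequence $\{u_{n}\}$, the identity
\begin{equation*}
J_{\lambda}(u_{n})-\frac{1}{p}\langle J_{\lambda}'(u_{n}),u_{n}\rangle=a\Bigl(\tfrac{1}{2}-\tfrac{1}{p}\Bigr)|u_{n}|_{\h}^{2}+b\Bigl(\tfrac{1}{4}-\tfrac{1}{p}\Bigr)|u_{n}|_{\h}^{4}-\lambda\Bigl(\tfrac{1}{q}-\tfrac{1}{p}\Bigr)\int|u_{n}|^{q}\,dx
\end{equation*}
combined with $p>4>q$ shows that the two coefficients in front of the $\h$-norm powers are strictly positive, while the concave remainder is dominated by $C|u_{n}|_{\h}^{q}$ with $q<2$; hence $\{u_{n}\}$ is bounded in $\h$. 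Extracting $u_{n}\rightharpoonup u$ weakly in $\h$ and strongly in $L^{q}\cap L^{p}$, testing $\langle J_{\lambda}'(u_{n}),u_{n}-u\rangle\to 0$ annihilates the nonlinear integrals and leaves $(a+b|u_{n}|_{\h}^{2})(|u_{n}|_{\h}^{2}-\langle u_{n},u\rangle_{\h})\to 0$; the uniform bound $a+b|u_{n}|_{\h}^{2}\geq a>0$ forces $|u_{n}|_{\h}\to|u|_{\h}$, yielding $u_{n}\to u$ in $\h$.

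For the Fountain geometry I would use the Hilbert basis $\{\varphi_{j}\}$ of $\h$ provided by the complex exponentials $(2\pi)^{-3/2}e^{\imath k\cdot x}$, reordered so that $|\varphi_{j}|_{\h}$ is non-decreasing in $j$, and set $Y_{k}:=\mathrm{span}\{\varphi_{1},\dots,\varphi_{k}\}$, $Z_{k}:=\overline{\mathrm{span}}\{\varphi_{j}:j\geq k+1\}$. On the finite-dimensional space $Y_{k}$ all norms are equivalent, and since $p>4$ the term $-\frac{1}{p}\int|u|^{p}\,dx$ dominates at infinity, so a sufficiently large $r_{k}$ produces $a_{k}:=\max_{u\in Y_{k},\,|u|_{\h}=r_{k}}J_{\lambda}(u)\leq 0$. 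On $Z_{k}$, the embedding constants $\beta_{k}(r):=\sup\{|u|_{L^{r}}:u\in Z_{k},\,|u|_{\h}=1\}$ tend to zero as $k\to\infty$ for every $r\in[1,\2)$, so
\begin{equation*}
J_{\lambda}(u)\geq\frac{a}{2}|u|_{\h}^{2}-\frac{\lambda}{q}\beta_{k}(q)^{q}|u|_{\h}^{q}-\frac{1}{p}\beta_{k}(p)^{p}|u|_{\h}^{p}.
\end{equation*}
Choosing $\rho_{k}>0$ so that the $L^{p}$-contribution is absorbed into $\frac{a}{4}|u|_{\h}^{2}$ forces $\rho_{k}\to\infty$, and since $q<2$ the concave piece is $o(\rho_{k}^{2})$, giving $b_{k}:=\inf_{u\in Z_{k},\,|u|_{\h}=\rho_{k}}J_{\lambda}(u)\to+\infty$. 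The smallness threshold $\lambda_{*}$ enters precisely here, to keep the $\lambda$-dependent concave contribution subordinate to $\rho_{k}^{2}$ uniformly in $k$.

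With $J_{\lambda}$ even, $J_{\lambda}(0)=0$, the $(\mathrm{PS})_{c}$ condition at every $c\in\R$, and the linking data $a_{k}\leq 0<b_{k}\to+\infty$ in place, the Fountain theorem furnishes an unbounded sequence of critical values $c_{k}\nearrow+\infty$, and hence infinitely many distinct nontrivial weak periodic solutions of \eqref{P}. The main difficulty I expect is the interplay between the nonlocal Kirchhoff coefficient $b|u|_{\h}^{2}$, which breaks the linearity of the principal part of $J_{\lambda}'$ and forces the strong-convergence step to rely on the Kirchhoff-tailored trick above, and the concave perturbation $\lambda|u|^{q-2}u$, whose effect on both finite- and high-frequency subspaces must be balanced through the decay of $\beta_{k}(q)$ in order to secure $a_{k}\leq 0$ and $b_{k}\to+\infty$ simultaneously.
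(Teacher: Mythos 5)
Your proposal is correct but takes a genuinely different route from the paper. The paper deals with the unboundedness from below of $\J$ by introducing a truncated functional $\I$ (following \cite{AzP}), which is coercive and satisfies (PS); it then applies Clark's theorem via the Krasnoselskii genus to the finite-dimensional spheres $\mathbb{S}\subset\mathbb{V}_{k}$ to produce infinitely many negative critical values of $\I$, and finally shows these are also critical values of $\J$ because they correspond to $\|U\|_{\X}^{2}<R_{0}$. The smallness of $\lambda$ is used there precisely to guarantee that $g(t)=\frac{a}{2}t-\frac{\lambda}{qS_{q}^{q/2}}t^{q/2}-\frac{1}{pS_{p}^{p/2}}t^{p/2}$ attains a positive maximum, which is what makes the truncation and the roots $R_{0}<R_{1}$ well defined. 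You instead apply the Fountain theorem directly to $J_{\lambda}$ on $\h$, verifying (PS)$_{c}$ at all levels via the identity $J_{\lambda}(u_{n})-\frac{1}{p}\langle J_{\lambda}'(u_{n}),u_{n}\rangle$ (which exploits $p>4>q$), and building the geometry on $Y_{k}/Z_{k}$ through the eigenfunction decomposition. This produces an unbounded sequence of positive critical values, i.e.\ high-energy solutions, as opposed to the paper's small negative-energy solutions; the two methods are complementary rather than equivalent.

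One remark on your own write-up: you assert that ``the smallness threshold $\lambda_{*}$ enters precisely here, to keep the $\lambda$-dependent concave contribution subordinate to $\rho_{k}^{2}$,'' but this is not actually where a restriction on $\lambda$ is needed. Choosing $\rho_{k}=\bigl(ap/(4\beta_{k}(p)^{p})\bigr)^{1/(p-2)}\to\infty$, the concave term $\tfrac{\lambda}{q}\beta_{k}(q)^{q}\rho_{k}^{q}$ is automatically $o(\rho_{k}^{2})$ because $q<2$ and $\beta_{k}(q)$ is bounded, with no constraint on the size of $\lambda$. Consequently your Fountain-theorem argument, if carried through in detail, would in fact establish the conclusion for \emph{every} $\lambda>0$, which is stronger than the statement of Theorem \ref{thm2}. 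This is not a gap in the proof but rather an indication that the paper's threshold $\lambda_{*}$ is an artifact of the truncation technique (which the author reuses uniformly in the critical case $p=\2$, where it genuinely is needed) and not a structural obstruction in the subcritical superquartic range $4<p<\2$.
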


\begin{thm}\label{thm3}
Let $p=4$ and $b>\frac{1}{S^{2}_{4}}$. Then, problem \eqref{P} has infinitely many periodic solutions,  for all $\lambda>0$.   
\end{thm}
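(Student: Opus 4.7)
The plan is to set up the energy functional
\[
J_\lambda(u)=\frac{a}{2}|u|^{2}_{\h}+\frac{b}{4}|u|^{4}_{\h}-\frac{\lambda}{q}\int_{(-\pi,\pi)^{3}}|u|^{q}\,dx-\frac{1}{4}\int_{(-\pi,\pi)^{3}}|u|^{4}\,dx,
\]
which is of class $C^{1}$ and even on $\h$, and to apply a symmetric multiplicity theorem for even coercive functionals in the spirit of Kajikiya/Clark: namely, if $J_\lambda$ is even, bounded below, satisfies (PS), and admits subspaces of arbitrary dimension on which it takes values below some negative level, then it has a sequence of critical points with negative critical values tending to $0$, each producing a nontrivial periodic solution of \eqref{P}.

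The first step is to use the hypothesis $b>1/S_{4}^{2}$ (where $S_{4}$ is the best constant in the embedding $\h\hookrightarrow L^{4}((-\pi,\pi)^{3})$, so that $S_{4}|u|_{4}^{2}\le |u|^{2}_{\h}$) to estimate
\[
\frac{b}{4}|u|^{4}_{\h}-\frac{1}{4}|u|_{4}^{4}\ \geq\ \frac{1}{4}\!\left(b-\frac{1}{S_{4}^{2}}\right)|u|^{4}_{\h}>0.
\]
Combined with $|u|_{q}^{q}\le C|u|^{q}_{\h}$ and $1<q<2<4$, this immediately gives coercivity and boundedness from below for $J_\lambda$, uniformly in $\lambda>0$. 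Note that the smallness of $\lambda$ plays no role here, explaining why Theorem \ref{thm3} holds for every $\lambda>0$ (in contrast with Theorem \ref{thm2}).

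Next I would verify the Palais-Smale condition. The assumption $s>3/4$ forces $\2>4$, so the embedding $\h\hookrightarrow L^{4}$ is compact (and likewise for $L^{q}$). Hence any (PS)-sequence is bounded by coercivity, admits a subsequence $u_{n}\rightharpoonup u$ in $\h$ with $u_{n}\to u$ in $L^{q}\cap L^{4}$, and then from $\langle J_\lambda'(u_{n}),u_{n}-u\rangle\to 0$ the nonlinear terms vanish, leaving
\[
(a+b|u_{n}|^{2}_{\h})\langle u_{n},u_{n}-u\rangle_{\h}\to 0.
\]
Because $a+b|u_{n}|^{2}_{\h}\ge a>0$, we deduce $|u_{n}|_{\h}\to|u|_{\h}$, and strong convergence follows from the Hilbert structure of $\h$. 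For the genus condition, for each $k\in\N$ pick a $k$-dimensional subspace $V_{k}\subset\h$; on $V_{k}$ all norms are equivalent, so $|u|_{q}\ge c_{k}|u|_{\h}$, and for $u\in V_{k}$ with $|u|_{\h}=r_{k}$ sufficiently small the term $-\frac{\lambda c_{k}^{q}}{q}r_{k}^{q}$ dominates the positive $O(r_{k}^{2})+O(r_{k}^{4})$ contributions, producing a sphere in $V_{k}$ on which $J_\lambda$ is strictly negative.

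The main obstacle is the verification of (PS) at the resonant exponent $p=4$: the Kirchhoff quartic term $\frac{b}{4}|u|^{4}_{\h}$ and the nonlinear quartic term $\frac{1}{4}|u|_{4}^{4}$ are of the same order, so without the condition $b>1/S_{4}^{2}$ the functional could fail to be coercive and the standard strategy would break down. What saves the argument is precisely the interplay between this algebraic condition (ensuring a quantitative coercivity estimate that survives the competition of the two quartic terms) and the fact that $p=4<\2$ keeps the $L^{4}$-embedding compact, so no concentration-compactness analysis is needed. Once these two ingredients are in hand, a direct application of Kajikiya's version of the symmetric mountain-pass theorem produces the required sequence of nontrivial weak periodic solutions.
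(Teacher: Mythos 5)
Your proof is correct and follows essentially the same route as the paper: coercivity of the energy functional from the quantitative estimate $b>1/S_{4}^{2}$ combined with $1<q<2<4$, Palais--Smale from boundedness plus compactness of the embedding into $L^{4}$ and $L^{q}$ (using $s>3/4\Rightarrow \2>4$), and a genus argument on finite-dimensional subspaces culminating in a Clark/Kajikiya-type symmetric multiplicity theorem. The only cosmetic difference is that you phrase the argument directly on $\h$ while the paper runs it in the extension space $\X$ (the two settings are identified via $\T$ and $\textup{Ext}$ up to the constant $\kappa_{s}$), and you invoke Kajikiya's theorem where the paper cites Clark; for this theorem the choice is immaterial.
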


\begin{thm}\label{thm4}
Let $p=\2$. Then, there exists $\lambda_{*}>0$ such that, for all $\lambda\in (0, \lambda_{*})$,  problem \eqref{P} has infinitely many periodic solutions.  
\end{thm}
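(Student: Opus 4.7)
The plan is to use Krasnoselskii's genus theory combined with a truncation argument, in the spirit of Garc\'ia Azorero--Peral--Alonso, adapted to the fractional Kirchhoff setting with periodic boundary conditions. Define the energy functional $I_\lambda:\h\to\R$ associated with \eqref{P} by
$$I_\lambda(u)=\frac{a}{2}|u|_{\h}^{2}+\frac{b}{4}|u|_{\h}^{4}-\frac{\lambda}{q}\int_{(-\pi,\pi)^{3}}|u|^{q}\,dx-\frac{1}{\2}\int_{(-\pi,\pi)^{3}}|u|^{\2}\,dx.$$
Since $s\in(3/4,1)$ forces $\2>4$, the functional $I_\lambda$ is unbounded from below, so genus theory cannot be applied to $I_\lambda$ directly.

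First, I would analyze the auxiliary scalar function $g_\lambda(t)=\frac{a}{2}t^{2}+\frac{b}{4}t^{4}-\frac{C\lambda}{q}t^{q}-\frac{C'}{\2}t^{\2}$ arising from the Sobolev embedding $\h\hookrightarrow L^{\2}$, and show that for $\lambda$ sufficiently small there exist $0<R_{1}(\lambda)<R_{2}(\lambda)$ such that $g_\lambda$ has a strictly negative minimum on $[0,R_{1}]$ and is positive on $[R_{1},R_{2}]$. Using a smooth even cutoff $\eta(|u|_{\h})$ with $\eta=1$ for $|u|_{\h}\le R_{1}$ and $\eta=0$ for $|u|_{\h}\ge R_{2}$, define the truncated functional
$$\tilde I_\lambda(u)=\frac{a}{2}|u|_{\h}^{2}+\frac{b}{4}|u|_{\h}^{4}-\frac{\lambda}{q}\int_{(-\pi,\pi)^{3}}|u|^{q}\,dx-\frac{\eta(|u|_{\h})}{\2}\int_{(-\pi,\pi)^{3}}|u|^{\2}\,dx,$$
which is even, of class $C^{1}$, coincides with $I_\lambda$ on the ball $\{|u|_{\h}\le R_{1}\}$, and is bounded from below on $\h$.

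The main obstacle will be the local Palais--Smale condition for $\tilde I_\lambda$ in the critical regime $p=\2$. The goal is to prove that every $(PS)_{c}$ sequence with $c$ strictly below an explicit threshold $c^{*}$ (depending on $a$, $b$, and the best fractional Sobolev constant $S_{*}$) is relatively compact. Such sequences are bounded thanks to the Kirchhoff coercivity; weak convergence to some $u\in\h$ follows, and concentration of mass in $L^{\2}$ must be excluded by a Brezis--Lieb type decomposition carefully combined with the nonlocal Kirchhoff coefficient $|u|_{\h}^{2}$, which produces delicate cross-terms. Choosing $\lambda_{*}>0$ small enough, the relevant negative levels $c_{k}$ constructed below will lie in $(c^{*},0)$, where strong compactness holds.

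Once the local $(PS)$ is in hand, Krasnoselskii's genus theory applied to $\tilde I_\lambda$ yields infinitely many critical values. Setting
$$c_{k}=\inf_{A\in\Gamma_{k}}\sup_{u\in A}\tilde I_\lambda(u),\qquad \Gamma_{k}=\{A\subset\h\setminus\{0\}:A\text{ closed, symmetric, }\gamma(A)\ge k\},$$
and exploiting that $q<2$ together with any $k$-dimensional subspace of $\h$, one constructs for every $k$ symmetric sets of genus at least $k$ on which $\tilde I_\lambda$ is bounded above by a strictly negative constant; hence $-\infty<c_{k}<0$ and $c_{k}\nearrow 0$ as $k\to\infty$. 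Standard deformation arguments ensure each $c_{k}$ is a critical value of $\tilde I_\lambda$, producing infinitely many distinct critical points $u_{k}$ with $\tilde I_\lambda(u_{k})<0$. Finally, since critical points of $\tilde I_\lambda$ at negative level necessarily satisfy $|u_{k}|_{\h}\le R_{1}(\lambda)$, they are genuine critical points of $I_\lambda$, and hence weak periodic solutions of \eqref{P}, for every $\lambda\in(0,\lambda_{*})$.
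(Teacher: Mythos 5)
Your overall strategy matches the paper's: truncate the critical Kirchhoff energy \`a la Garc\'ia Azorero--Peral, verify a local Palais--Smale condition below an explicit $\lambda$-dependent threshold, and then run the Krasnoselskii genus minimax scheme to produce infinitely many negative critical levels of the truncated functional, which are automatically critical points of the original functional because they land in the region where the cutoff is inactive. That skeleton is correct.

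The genuine gap is in the local $(PS)$ verification, which you describe only as ``a Brezis--Lieb type decomposition carefully combined with the nonlocal Kirchhoff coefficient $|u|_{\h}^{2}$.'' This is precisely where the paper has to do real work, and it does so by first passing to the Caffarelli--Silvestre--Stinga--Torrea extension on the half-cylinder $\A$ and then proving a periodic-setting concentration-compactness lemma (Lemma~\ref{CCL}) for bounded, \emph{tight} sequences in $\X$, yielding the decomposition $\nu=|u|^{\2}+\sum_i\nu_i\delta_{x_i}$, $\mu\geq y^{1-2s}(|\nabla U|^2+m^2U^2)+\sum_i\mu_i\delta_{x_i}$ with $\mu_i\geq S_*\nu_i^{2/\2}$. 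You propose to work directly on $\h$; but the concentration-compactness argument requires localizing the energy with cutoffs $\psi_\rho$, and the $\h$-norm is nonlocal, so $|\psi_\rho u|_{\h}$ does not split cleanly into near and far contributions. The extension converts this into a local weighted gradient problem where the usual product-rule estimates and the compact embedding $H^1(K,y^{1-2s})\hookrightarrow L^2(K,y^{1-2s})$ apply. Unless you reprove a genuinely nonlocal concentration-compactness principle on $\h$ (handling the tail/commutator terms), the argument does not close. Moreover, in the extension setting the paper must additionally establish \emph{tightness} of $(PS)$ sequences in the unbounded $y$-direction (the slab-averaging argument at the start of Lemma~\ref{lem7.1}); your sketch omits this issue entirely.

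Two smaller points. First, your sentence that the levels $c_k$ ``will lie in $(c^*,0)$'' is logically inverted: compactness holds for $c<c^*$, and the point is that for $\lambda$ small the threshold $c^*$ is \emph{positive} (see the paper's Remark after Lemma~\ref{lem7.1}), so $c_k<0<c^*$ automatically. Writing $(c^*,0)$ suggests $c_k>c^*$, which would be the wrong inequality. Second, you assert $c_k\nearrow 0$; the paper does not need or prove this, and instead handles possible repetitions $c_k=\cdots=c_{k+r}$ via a genus estimate $\gamma(K_c)\geq r+1$ (Lemma~\ref{lem7.5}), which together with Proposition~\ref{prop3.5} already gives infinitely many solutions. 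Your version would need a separate justification of the monotone convergence of $c_k$, which is not automatic.
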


\noindent
The proofs of our main results are obtained by applying variational and topological arguments inspired by \cite{ACF, AzP, FJ} after transforming \eqref{P} into a local problem in one more dimension, via a suitable variant of the extension method \cite{CS} in the periodic setting. In this way, we are able to overcome the nonlocal nature of the operator $(-\Delta+m^{2})^{s}$.
More precisely, as proved in \cite{A2, A3} (see also \cite{ST}), for any $u\in \h$ there exists a unique function $U\in \X$, called periodic extension of $u$, weakly solving
\begin{equation*}
\left\{
\begin{array}{ll}
-\dive(y^{1-2s} \nabla U)+m^{2}y^{1-2s}U =0 &\mbox{ in }\mathcal{S}_{2\pi}:=(-\pi,\pi)^{3} \times (0,\infty),  \\
U_{| {\{x_{i}=0\}}}= U_{| {\{x_{i}=2\pi\}}} & \mbox{ on } \partial_{L}\mathcal{S}_{2\pi}:=\partial (-\pi,\pi)^{3} \times [0,\infty), \\
U(x,0)=u(x)  &\mbox{ on }  \partial^{0}\mathcal{S}_{2\pi}:=(-\pi,\pi)^{3} \times \{0\},
\end{array}
\right.
\end{equation*}
where $\X$ is defined as the closure of the set $\mathcal{C}^{\infty}_{2\pi}(\overline{\R^{3+1}_{+}})$ of smooth and $2\pi$-periodic (in $x$) functions in $\R^{3+1}_{+}$ with respect to the norm
$$
\|U\|_{\X}:=\Bigl(\iint_{\mathcal{S}_{2\pi}} y^{1-2s} (|\nabla U|^{2}+m^{2s} U^{2}) \, dx dy\Bigr)^{1/2}.
$$  
Moreover, $\h$ coincides with the space of traces on $(-\pi,\pi)^{3} \times \{0\}$ of functions in $\X$.\\
The importance of the extension function $U$ is that it is connected to the operator \eqref{nfrls} of the original function $u$ through the formula  
$$
-\lim_{y\rightarrow 0^{+}} y^{1-2s} \frac{\partial U}{\partial y}(x,y) = \kappa_{s} (-\Delta + m^{2})^{s} u(x) \mbox{ in } \mathbb{H}^{-s}_{2\pi}, 
$$
where $\displaystyle{\kappa_{s}= 2^{1-2s} \frac{\Gamma(1-s)}{\Gamma(s)}}$ and $\mathbb{H}^{-s}_{2\pi}$ denotes the dual space of $\h$;  see Section \ref{Section2}.

As a consequence of the previous facts, the study of \eqref{P} is equivalent to investigate  
the following degenerate elliptic problem 
\begin{equation}\label{R}
\left\{
\begin{array}{ll}
-\dive(y^{1-2s} \nabla U)+m^{2}y^{1-2s}U =0 &\mbox{ in }\mathcal{S}_{2\pi},  \\
\smallskip
U_{| {\{x_{i}=-\pi\}}}= U_{| {\{x_{i}=\pi\}}} & \mbox{ on } \partial_{L}\mathcal{S}_{2\pi}, \\
\smallskip
(a+b\|U\|^{2}_{\X}){\partial_{\nu}^{1-2s} U}=\kappa_{s} [\lambda|u|^{q-2}u+|u|^{p-2}u]   &\mbox{ on }\partial^{0}\mathcal{S}_{2\pi},
\end{array}
\right.
\end{equation}
where
$$
{\partial_{\nu}^{1-2s} U(x)}:=-\lim_{y \rightarrow 0^{+}} y^{1-2s} \frac{\partial U}{\partial y}(x,y)
$$
is the conormal exterior derivative of $U$.

In the light of the variational structure of $(\ref{R})$, we look for critical points of the energy functional $\mathcal{J}: \X\rightarrow \R$ given by
 $$
 \mathcal{J}(U)=\frac{a}{2} \|U\|_{\X}^{2}+\frac{b}{4} \|U\|_{\X}^{4}-\lambda \frac{\kappa_{s}}{q}|\T(U)|_{q}^{q} -\frac{\kappa_{s}}{p} |\T(U)|_{p}^{p}.
 $$
Clearly, the functions $U\in \X$ are defined in $\A$, so the local arguments that we will develop along the paper have to be handled carefully in order to take care of the trace of the involved functions.
For instance, in the proof of Theorem \ref{thm1}, we use Clark's Theorem \cite{Clark} after proving a very interesting relation for appropriate finite dimensional spaces of $\X$; see Sections \ref{Section3}. Again, in the critical case, in order to recover some compactness properties for the functional $\J$, we establish a suitable variant of the concentration-compactness lemma \cite{Lions} in periodic setting taking into account that $\X$ is locally compactly embedded in the weighted Lebesgue spaces $L^{2}(\A, y^{1-2s})$; see  Section \ref{Section5}. We also stress that the restriction $s\in (\frac{3}{4}, 1)$ plays an important role to apply our variational arguments. Indeed, when $s\in (0, \frac{3}{4}]$, since $2^{*}_{s}\leq 4$, it is difficult to derive the geometric structure of the functional and the boundedness, convergence of the Palais-Smale sequences.\\
To our knowledge, this is the first time that the existence and multiplicity of periodic solutions for fractional Kirchhoff problems are investigated in the literature. Moreover, we believe that our results can be improved and extended for more general fractional Kirchhoff equations in closed manifolds. These questions will be addressed in future works. 
\smallskip

\noindent
The structure of the paper is the following. In Section \ref{Section2} we present some preliminary results concerning the  fractional periodic Sobolev spaces and the extension method in periodic setting. We also recall some basic notions on the Krasnoselskii genus that we will use in the proof of our main results. In Sections \ref{Section3} we deal with periodic solutions in the cases $2<p<4$ and $p=4$. In Section \ref{Section4} we study the case $4<p<\2$ using an appropriate truncated functional inspired by \cite{AzP}. In Section \ref{Section5}, when $p=\2$, we prove the existence of infinitely many solutions using a truncated functional and a suitable version of the concentration-compactness lemma \cite{Lions}.

\section{\bf Preliminaries}\label{Section2}
\subsection{Extension method in periodic setting}
In this section we fix the notations and we collect some preliminary facts for future references. Throughout this paper, we denote the upper half-space in $\R^{4}$ by
$$
\R^{3+1}_{+}=\{(x,y)\in \R^{4}: x\in \R^{3}, y>0 \}.
$$
\noindent
Let $\mathcal{S}_{2\pi}:=(-\pi,\pi)^{3}\times(0,\infty)$ denote the half-cylinder in $\R^{3+1}_{+}$ with basis $\partial^{0}\mathcal{S}_{2\pi}:=(-\pi,\pi)^{3}\times \{0\}$ and lateral boundary $\partial_{L}\mathcal{S}_{2\pi}:=\partial (-\pi,\pi)^{3}\times [0,+\infty)$. We denote by $B_{r}^{+}(z,t)=\{(x,y)\in \R^{3+1}_{+}: |(x,y)-(z,t)|<r\}$ the ball in $\R^{3+1}_{+}$ of center $(z,t)\in \R^{3+1}_{+}$ and radius $r>0$.
We also use the notation $|u|_{r}$ to denote the norm of any function $u: \R^{3}\rightarrow \R$ in  $L^{r}(-\pi,\pi)^{3}$.\\
Now, we recall the it is possible to define a trace operator from $\X$ to $\h$.  
\begin{thm}\cite{A2, A3}\label{tracethm}
There exists a surjective linear operator $\textup{Tr} : \X\rightarrow \h$  such that:
\begin{itemize}
\item[$(i)$] $\textup{Tr}(U)=U|_{\partial^{0} \mathcal{S}_{2\pi}}$ for all $U\in \mathcal{C}_{2\pi}^{\infty}(\overline{\R^{3+1}_{+}}) \cap \X$;
\item[$(ii)$] $\textup{Tr}$ is bounded and
\begin{equation}\label{tracein}
\sqrt{\kappa_{s}} |\textup{Tr}(U)|_{\h}\leq \|U\|_{\X},
\end{equation}
for every $U\in \X$.
In particular, equality holds in \eqref{tracein} for some $v\in \X$ if and only if $v$ weakly solves the following equation
$$
-\dive(y^{1-2s} \nabla U)+m^{2}y^{1-2s}U =0 \, \mbox{ in } \, \mathcal{S}_{2\pi}.
$$
\end{itemize}
\end{thm}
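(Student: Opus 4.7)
The plan is to reduce the theorem to an explicit Fourier computation on a dense subspace and then argue by density and a Pythagorean-type decomposition. For a smooth periodic function $u=\sum_{k\in\Z^{3}} c_{k}\frac{e^{\imath k\cdot x}}{(2\pi)^{3/2}}\in\mathcal{C}^{\infty}_{2\pi}(\R^{3})$, I would write a candidate extension by separation of variables,
\[
E(u)(x,y):=\sum_{k\in\Z^{3}} c_{k}\,\theta_{k}(y)\,\frac{e^{\imath k\cdot x}}{(2\pi)^{3/2}},
\]
where the profile $\theta_{k}\colon[0,\infty)\to\R$ is the unique bounded solution of the ODE $-\theta_{k}''-\frac{1-2s}{y}\theta_{k}'+(|k|^{2}+m^{2})\theta_{k}=0$ with $\theta_{k}(0)=1$ and $\theta_{k}(y)\to 0$ at infinity. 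After the rescaling $\theta_{k}(y)=\varphi_{s}(\sqrt{|k|^{2}+m^{2}}\,y)$ the equation reduces to a canonical one whose decaying solution is, up to normalization, $t^{s}K_{s}(t)$, with $K_{s}$ the modified Bessel function of the second kind. The asymptotics of $K_{s}$ near $0$ then produce the two key boundary identities
\[
-\lim_{y\to 0^{+}} y^{1-2s}\theta_{k}'(y)=\kappa_{s}(|k|^{2}+m^{2})^{s},\qquad \int_{0}^{\infty} y^{1-2s}\bigl(|\theta_{k}'|^{2}+(|k|^{2}+m^{2})\theta_{k}^{2}\bigr)\,dy=\kappa_{s}(|k|^{2}+m^{2})^{s}.
\]

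Next, applying Parseval slice-by-slice, using the orthonormality of $\{(2\pi)^{-3/2}e^{\imath k\cdot x}\}$ in $L^{2}((-\pi,\pi)^{3})$ and the second identity above, I obtain
\[
\|E(u)\|_{\X}^{2}=\sum_{k\in\Z^{3}}|c_{k}|^{2}\kappa_{s}(|k|^{2}+m^{2})^{s}=\kappa_{s}\,|u|_{\h}^{2},
\]
so $E(u)\in\X$, $E(u)(\cdot,0)=u$, and $\sqrt{\kappa_{s}}|u|_{\h}=\|E(u)\|_{\X}$ on the dense class $\mathcal{C}^{\infty}_{2\pi}(\R^{3})$. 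Consequently $E$ extends to a bounded right inverse $\h\to\X$ of the trace map, which already gives surjectivity. Defining $\T(U):=U(\cdot,0)$ on $\mathcal{C}^{\infty}_{2\pi}(\overline{\R^{3+1}_{+}})\cap\X$, point $(i)$ holds by construction, and the inequality in $(ii)$ will be proved first on this dense subspace and then extended to all of $\X$ by continuity.

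For the inequality and the equality case I would use the orthogonal decomposition $U=E(\T(U))+W$ with $\T(W)=0$. Since each $\theta_{k}$ solves the radial ODE, $E(\T(U))$ weakly satisfies $-\dive(y^{1-2s}\nabla V)+m^{2}y^{1-2s}V=0$ in $\A$. Testing this equation against $W$, the lateral contribution vanishes by $2\pi$-periodicity and the trace term at $\{y=0\}$ drops out because $\T(W)=0$, yielding the Pythagorean identity $\|U\|_{\X}^{2}=\|E(\T(U))\|_{\X}^{2}+\|W\|_{\X}^{2}$. Combined with the Parseval computation this gives
\[
\kappa_{s}|\T(U)|_{\h}^{2}=\|E(\T(U))\|_{\X}^{2}\leq\|U\|_{\X}^{2},
\]
with equality iff $W\equiv 0$, i.e.\ iff $U$ itself weakly solves the degenerate equation.

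The main obstacle I anticipate is the rigorous justification of the integration by parts used to produce the Pythagorean identity: the weight $y^{1-2s}$ is degenerate at $y=0$, the half-cylinder $\A$ is unbounded, and $W$ is only a priori in $\X$. I would handle this by approximating $U$ and $W$ by elements of $\mathcal{C}^{\infty}_{2\pi}(\overline{\R^{3+1}_{+}})\cap\X$, truncating in $y$ at large scales (exploiting the exponential decay of the $\theta_{k}$) and near $y=0$ (using the trace estimate on smooth approximants), and then passing to the limit. A similar cutoff and truncation argument is needed to legitimize the Fourier-series manipulations defining $E(u)$ when $u\in\h$ is not smooth, but in all cases the required uniform control is provided by the Parseval identity itself.
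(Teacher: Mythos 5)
The paper does not prove Theorem~\ref{tracethm}: it is imported verbatim from \cite{A2,A3} (which in turn adapt the Stinga--Torrea extension \cite{ST} to the periodic cylinder). Your proposal is a correct reconstruction of exactly that argument: the Bessel-profile extension $E(u)=\sum_{k}c_{k}\,\theta_{k}(y)e^{\imath k\cdot x}$ with $\theta_{k}(y)=\varphi_{s}(\sqrt{|k|^{2}+m^{2}}\,y)$ and $\varphi_{s}(t)=\frac{2^{1-s}}{\Gamma(s)}t^{s}K_{s}(t)$, the Parseval identity $\|E(u)\|_{\X}^{2}=\kappa_{s}|u|_{\h}^{2}$, and the orthogonal splitting $U=E(\T(U))+W$ with $\T(W)=0$ (so that $\langle E(\T(U)),W\rangle_{\X}=\kappa_{s}\langle \T(U),\T(W)\rangle_{\h}=0$) yield both the trace inequality and the characterization of equality; surjectivity comes from $E$ being a right inverse. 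The technical points you flag -- justifying the one-dimensional integration by parts with the degenerate weight, and the density/limiting step -- are genuine but routine given the exponential decay of $K_{s}$ and the uniform Parseval bound, so no essential idea is missing.
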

\begin{thm}\cite{A2, A3}\label{compacttracethm}
Let  $1\leq q \leq \2$. Then $\textup{Tr}(\X)$ is continuously embedded in $L^{q}(-\pi,\pi)^{3}$.  Moreover,  $\textup{Tr}(\X)$ is compactly embedded in $L^{q}(-\pi,\pi)^{3}$  for any  $1\leq q < \2$.
\end{thm}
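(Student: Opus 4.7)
The plan is to reduce both embeddings to embeddings of the periodic fractional Sobolev space $\h$ and then establish them via Fourier-analytic techniques combined with a comparison with the Euclidean fractional Sobolev inequality.

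First, Theorem \ref{tracethm} provides a bounded surjection $\T:\X\to\h$ with $\sqrt{\kappa_{s}}|\T(U)|_{\h}\le\|U\|_{\X}$; hence $\T(\X)=\h$ as sets and it suffices to prove that $\h\hookrightarrow L^{q}(-\pi,\pi)^{3}$ continuously for $1\le q\le\2$ and compactly for $1\le q<\2$. For $q\in[1,2]$, the trivial lower bound $(|k|^{2}+m^{2})^{s}\ge m^{2s}$ together with Parseval gives $|u|_{2}\le m^{-s}|u|_{\h}$, and then H\"older's inequality on the bounded cube $(-\pi,\pi)^{3}$ yields $|u|_{q}\le C|u|_{2}$. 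The substantial case is the critical exponent $q=\2$: here I would exploit the Fourier representation of $u\in\h$ to compare the periodic norm $|u|_{\h}^{2}=\sum_{k}(|k|^{2}+m^{2})^{s}|c_{k}|^{2}$ with the Gagliardo seminorm of a compactly supported perturbation $\varphi u$ on $\R^{3}$ (with $\varphi$ a smooth cutoff), then transfer the classical embedding $H^{s}(\R^{3})\hookrightarrow L^{\2}(\R^{3})$ to the torus via a covering argument based on $2\pi$-periodicity. The intermediate range $q\in(2,\2)$ follows by log-convex interpolation between the $L^{2}$ and $L^{\2}$ estimates.

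For the compact embedding, I would first establish the case $q=2$ via Fourier truncation. Given a bounded sequence $\{u_{n}\}\subset\h$ with Fourier coefficients $c^{n}_{k}$, a diagonal extraction yields a subsequence with $c^{n}_{k}\to c_{k}$ for every $k\in\Z^{3}$. For any $R>0$,
\begin{equation*}
\sum_{|k|>R}|c^{n}_{k}-c_{k}|^{2}\le (R^{2}+m^{2})^{-s}\sum_{|k|>R}(|k|^{2}+m^{2})^{s}|c^{n}_{k}-c_{k}|^{2}\le C(R^{2}+m^{2})^{-s},
\end{equation*}
so letting $R\to\infty$ after taking $n\to\infty$ on the finitely many modes $|k|\le R$ yields $u_{n}\to u$ in $L^{2}$ by Parseval. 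For $q\in(2,\2)$ the interpolation
\begin{equation*}
|u_{n}-u|_{q}\le|u_{n}-u|_{2}^{\theta}\,|u_{n}-u|_{\2}^{1-\theta},\qquad\tfrac{1}{q}=\tfrac{\theta}{2}+\tfrac{1-\theta}{\2},
\end{equation*}
combined with the uniform $L^{\2}$ bound from the continuous critical embedding, upgrades $L^{2}$-convergence to $L^{q}$-convergence.

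The main obstacle is the critical embedding $\h\hookrightarrow L^{\2}$. The standard fractional Sobolev inequality in $\R^{N}$ rests on dilation invariance of the Gagliardo seminorm, which is lost on the torus. A route more in the spirit of the extension framework used by the paper would be to prove a weighted Sobolev--trace inequality directly for $\X$: combining a weighted Sobolev embedding of $\X$ into $L^{2\gamma}(\A,y^{1-2s})$ (for appropriate $\gamma$) with a trace inequality on $\partial^{0}\A$ yields $|\T(U)|_{\2}\le C\|U\|_{\X}$ with the sharp dependence on $\kappa_{s}$. Carrying this out on the half-cylinder $\A$, which is unbounded in $y$ but bounded in $x$, so that the Muckenhoupt weight $y^{1-2s}$ interacts correctly with the periodic directions, is the technically delicate point.
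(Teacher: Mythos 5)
The paper itself does not prove this statement: it is imported wholesale from the author's earlier works \cite{A2, A3}, so there is no in-paper argument to compare against. Your overall scaffolding is reasonable and, as far as it goes, aligned with how such results are organized: reduce to embeddings of $\h$ via the surjective bounded trace operator, handle the subcritical range by Parseval plus H\"older on the bounded cube, obtain compactness at $q=2$ by a Fourier-truncation/diagonal argument, and then interpolate upward. The truncation estimate and the $L^{2}$-$L^{\2}$ interpolation are both correct, and the reduction $\T(\X)=\h$ is justified by Theorem~\ref{tracethm}.

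The genuine gap is that the continuous critical embedding $\h\hookrightarrow L^{\2}(-\pi,\pi)^{3}$ is never actually proven. This is the substance of the theorem: everything else in your argument (the interpolation for $q\in(2,\2)$ in both the continuous and compact parts) leans on it, and you yourself flag it as ``the main obstacle'' and ``the technically delicate point'' without carrying out either of the two routes you sketch. The cutoff-plus-covering route is not a minor detail: it requires a separate equivalence lemma relating the Fourier-side norm $\sum_{k}(|k|^{2}+m^{2})^{s}|c_{k}|^{2}$ to a Gagliardo seminorm, and then a genuine estimate showing $[\varphi u]_{H^{s}(\R^{3})}\leq C\,|u|_{\h}$, neither of which is immediate (the cross term $u(y)\bigl(\varphi(x)-\varphi(y)\bigr)$ produces an integral of the periodic function $|u|^{2}$ over all of $\R^{3}$ against a slowly decaying kernel, which must be controlled by a summation over periods). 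Your second route, a weighted Sobolev--trace inequality for $\X$ on the half-cylinder, is in fact the one taken in \cite{A2, A3}, and is the more natural choice inside this extension framework; had you executed it, you would have recovered the cited argument. As it stands, the plan outlines where the proof must go but does not close the step on which the whole theorem rests.
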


\noindent
Taking into account Theorem \ref{tracethm} and Theorem \ref{compacttracethm}, it is possible to introduce the notion of extension for a function $u\in \h$. More precisely, we have the following result:
\begin{thm}\cite{A2, A3}
Let $u\in \h$. Then, there exists a unique $U\in \X$ such that
\begin{equation*}\label{extPu}
\left\{
\begin{array}{ll}
-\dive(y^{1-2s} \nabla U)+m^{2}y^{1-2s}U =0 &\mbox{ in }\mathcal{S}_{2\pi},  \\
U_{| {\{x_{i}=-\pi\}}}= U_{| {\{x_{i}=\pi\}}} & \mbox{ on } \partial_{L}\mathcal{S}_{2\pi}, \\
U(\cdot, 0)=u  &\mbox{ on } \partial^{0}\mathcal{S}_{2\pi},
\end{array}
\right.
\end{equation*}
and
\begin{align*}\label{conormal}
-\lim_{y \rightarrow 0^{+}} y^{1-2s}\frac{\partial U}{\partial y}(x,y)=\kappa_{s} (-\Delta+m^{2})^{s}u(x) \mbox{ in } \mathbb{H}^{-s}_{2\pi}.
\end{align*}
We call $U\in \X$ the periodic extension of $u\in \h$, and we denote it by $\textup{Ext}(u)$. 
Moreover, $\textup{Ext}(u)$ has the following properties:
\begin{itemize}
\item[$(i)$] $\textup{Ext}(u)$ is smooth for $y>0$ and $2\pi$-periodic in $x$;
\item[$(ii)$] $\|\textup{Ext}(u)\|_{\X}\leq \|V\|_{\X}$ for any $V\in \X$ such that $\textup{Tr}(V)=u$;
\item[$(iii)$] $\|\textup{Ext}(u)\|_{\X}=\sqrt{\kappa_{s}} |u|_{\h}$.
\end{itemize}
\end{thm}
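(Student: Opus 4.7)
The plan is to construct $\text{Ext}(u)$ explicitly by a Fourier ansatz in $x$, reducing the degenerate boundary value problem to a decoupled family of one-dimensional ODEs. Write $u = \sum_{k \in \Z^3} c_k \frac{e^{\imath k\cdot x}}{(2\pi)^{3/2}}$ and seek $U$ of the form
$$
U(x,y) = \sum_{k \in \Z^3} w_k(y) \frac{e^{\imath k\cdot x}}{(2\pi)^{3/2}}.
$$
Periodicity in $x$ is automatic, and separation of variables reduces the PDE to the family
$$
-(y^{1-2s} w_k'(y))' + \omega_k^2 \, y^{1-2s} w_k(y) = 0, \qquad w_k(0) = c_k, \qquad \omega_k := \sqrt{|k|^2 + m^2},
$$
together with a decay requirement at $y = \infty$. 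For each $k$, the substitution $w_k(y) = c_k\, \theta_s(\omega_k y)$, with
$$
\theta_s(t) := \frac{2^{1-s}}{\Gamma(s)} \, t^s K_s(t)
$$
and $K_s$ the modified Bessel function of the second kind of order $s$, yields the unique decaying solution. Standard Bessel asymptotics at the origin give $\theta_s(0)=1$ and the crucial identity $-\lim_{t\to 0^+} t^{1-2s}\theta_s'(t) = \kappa_s$.

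Next I would verify that the resulting series $U$ lies in $\X$ and satisfies (iii). By orthonormality of $\{(2\pi)^{-3/2} e^{\imath k\cdot x}\}$ in $L^2((-\pi,\pi)^3)$, the $\X$-norm splits as a sum over modes of one-dimensional weighted integrals in $w_k$ and $w_k'$. Testing the ODE against $w_k$ and integrating by parts on $(0, \infty)$ collapses each such integral to a single boundary contribution at $y = 0$, which by the Bessel asymptotic for $\theta_s'$ equals $\kappa_s\, \omega_k^{2s} |c_k|^2$. Summing over $k$ yields $\|U\|_\X^2 = \kappa_s |u|_\h^2$, proving (iii) and in particular that $U \in \X$. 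Property (i) is immediate from the Fourier representation together with the smoothness and exponential decay of $\theta_s$ on $(0, \infty)$, while the conormal-derivative identity
$$
-\lim_{y\to 0^+} y^{1-2s} \frac{\partial U}{\partial y}(x,y) = \kappa_s (-\Delta + m^2)^s u(x)
$$
follows termwise from the same Bessel asymptotic and summation in $\mathbb{H}^{-s}_{2\pi}$.

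For uniqueness and the minimization property (ii) I would exploit convexity and mode-decoupling. Given any $V \in \X$ with $\textup{Tr}(V) = u$, expand $V(x,y) = \sum_k v_k(y) \frac{e^{\imath k\cdot x}}{(2\pi)^{3/2}}$, so that $v_k(0) = c_k$. The $\X$-energy decouples by modes into strictly convex one-dimensional functionals in $v_k$ with fixed boundary value $c_k$; the unique minimizer of each is exactly $w_k$, which is simultaneously the Euler--Lagrange solution of the corresponding ODE. Summing the per-mode inequalities gives $\|U\|_\X \le \|V\|_\X$, with equality iff $V = U$, yielding both uniqueness of $U$ as a solution of the boundary value problem and property (ii).

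The main technical obstacle will be the careful handling of the modified Bessel function $K_s$ for the non-integer order $s \in (3/4, 1)$: specifically, establishing the normalization $\theta_s(0) = 1$, the asymptotic $-\lim_{t \to 0^+} t^{1-2s} \theta_s'(t) = \kappa_s$ with the exact constant $\kappa_s = 2^{1-2s}\Gamma(1-s)/\Gamma(s)$, and sufficient decay of $\theta_s$ and $\theta_s'$ at infinity to justify the integrations by parts and the term-by-term manipulation of the Fourier series. These are classical but nontrivial facts about fractional-order Bessel functions, and they are precisely what selects the constant $\kappa_s$ appearing in both (iii) and the conormal-derivative formula.
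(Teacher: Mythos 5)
The paper does not prove this theorem; it is stated with citations to \cite{A2, A3} (and the related extension framework of \cite{ST}). Your proposal — expanding in Fourier modes in $x$, reducing to the weighted Bessel ODE $-(y^{1-2s}w_k')'+\omega_k^2 y^{1-2s}w_k=0$ with $\omega_k=\sqrt{|k|^2+m^2}$, selecting the decaying solution $w_k(y)=c_k\theta_s(\omega_k y)$ with $\theta_s(t)=\frac{2^{1-s}}{\Gamma(s)}t^sK_s(t)$, and reading off $\theta_s(0)=1$ together with $-\lim_{t\to 0^+}t^{1-2s}\theta_s'(t)=\kappa_s$ to get the conormal constant, the norm identity, and mode-by-mode convexity for uniqueness and minimality — is exactly the construction carried out in those references, and your Bessel asymptotics and the integration-by-parts that collapses each mode's energy to $\kappa_s\omega_k^{2s}|c_k|^2$ are correct. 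The only points to tighten in a full write-up, which you already flag, are the justification of termwise differentiation and of taking the $y\to 0^+$ limit in $\mathbb{H}^{-s}_{2\pi}$ rather than pointwise, and the vanishing of the boundary term at $y=\infty$ from the exponential decay of $K_s$.
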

\noindent
In view of the previous facts, we can reformulate the nonlocal problem (\ref{P}) with periodic boundary conditions, in a local way according to the following definition.
\begin{defn}
We say that $u\in \h$ is a weak solution to \eqref{P} if and only if $u=\textup{Tr}(U)$
and $U\in \X$ satisfies
\begin{align*}
(a+b\|U\|^{2}_{\X}) & \iint_{\mathcal{S}_{2\pi}} y^{1-2s} (\nabla U \nabla V + m^{2} U V  ) \, dxdy \\
&=\kappa_{s} \int_{(-\pi, \pi)^{3}} [\lambda |\T(U)|^{q-2}\T(U)+|\T(U)|^{p-2}\T(U)]\T(V) \,dx
\end{align*}
for every $V\in \X$.
\end{defn}

\noindent
Next, we prove a very useful result which derives from the theory for the powers of closed positive self-adjoint unbounded operators in a Hilbert space:
\begin{lem}\label{lemmino}
\begin{itemize}
\item[$(i)$] The operator $(-\Delta+m^{2})^{s}$ has a countable family of eigenvalues $\{\lambda_{\ell}\}_{\ell\in \N}$ which can be written as an increasing sequence of positive numbers
$$
0<\lambda_{1}<\lambda_{2}\leq \dots\leq \lambda_{\ell}\leq \lambda_{\ell+1}\leq \dots
$$
Each eigenvalue is repeated a number of times equal to its multiplicity $\mathopen{(}$which is finite$\mathclose{)}$$;$
 \item[$(ii)$] $\lambda_{\ell}=\mu_{\ell}^{s}$ for all $\ell\in \N$, where $\{\mu_{\ell}\}_{\ell\in \N}$ is the increasing sequence of eigenvalues of $-\Delta+m^{2}$$;$
\item[$(iii)$] $\lambda_{1}=m^{2s}$ is simple, $\lambda_{\ell}=\mu_{\ell}^{s} \rightarrow +\infty$ as $\ell \rightarrow +\infty$$;$
 \item[$(iv)$] The sequence $\{u_{\ell}\}_{\ell\in \N}$ of eigenfunctions corresponding to $\lambda_{\ell}$ is an orthonormal basis of $L^{2}(-\pi, \pi)^{3}$ and an orthogonal basis of the Sobolev space $\h$.
Let us note that $\{u_{\ell}, \mu_{\ell}\}_{\ell\in \N}$ are the eigenfunctions and eigenvalues of $-\Delta+m^{2}$ under periodic boundary conditions$;$
\item[$(v)$] For any $\ell\in \N$, $\lambda_{\ell}$ has finite multiplicity, and there holds
$$
\lambda_{\ell}=\min_{u\in \mathbb{P}_{\ell}\setminus \{0\}} \frac{|u|^{2}_{\h}}{|u|^{2}_{2}}   \quad \mbox{$\mathopen{(}$Rayleigh's principle$\mathclose{)}$}
$$
where
$$
\mathbb{P}_{\ell}:=\{u\in \h: \langle u, u_{j} \rangle_{\h} =0, \mbox{ for } j=1, \dots, \ell-1\}.
$$
\end{itemize}
\end{lem}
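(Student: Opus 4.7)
The plan is to exploit the fact that on the torus the spectral decomposition is already built into the Fourier expansion used to define the operator \eqref{nfrls}. For each $k \in \Z^3$ the function $\phi_k(x) := e^{\imath k\cdot x}/(2\pi)^{3/2}$ is smooth and $2\pi$-periodic, belongs to $\h$, and direct substitution in \eqref{nfrls} gives
$$
(-\Delta+m^{2})^{s}\phi_{k} = (|k|^{2}+m^{2})^{s}\phi_{k}.
$$
Thus the candidate eigenvalues are exactly the numbers $\{(|k|^{2}+m^{2})^{s} : k\in \Z^{3}\}$, and conversely any $L^{2}$-eigenfunction, expanded in its Fourier series, forces its Fourier coefficients $c_{k}$ to vanish off a single eigenvalue level $\{k : |k|=n\}$; this identifies the full spectrum.

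From here, items (i)--(iv) follow quickly. For (i), the set $\{k\in \Z^{3}: |k|^{2}=n\}$ is finite for every $n\in \N_{0}$ (being a sublevel of a positive-definite integer quadratic form), so each eigenvalue has finite multiplicity; listing the values $(|k|^{2}+m^{2})^{s}$ in non-decreasing order and repeating according to multiplicity yields the sequence $\{\lambda_{\ell}\}$. For (ii), the same argument applied to $-\Delta+m^{2}$ gives $\mu_{\ell}=|k|^{2}+m^{2}$ on the corresponding eigenspace, whence $\lambda_{\ell}=\mu_{\ell}^{s}$. For (iii), $k=0$ is the only index realizing the minimum $|k|^{2}+m^{2}=m^{2}$, so $\lambda_{1}=m^{2s}$ is simple; and $\lambda_{\ell}\to +\infty$ since $|k|\to+\infty$ along any enumeration of $\Z^{3}$. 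For (iv), $\{\phi_{k}\}$ (or their real/imaginary parts grouped by $\pm k$) is an orthonormal basis of $L^{2}(-\pi,\pi)^{3}$ by classical Fourier theory, and the very definition of the norm $|\cdot|_{\h}$ shows that with respect to the inner product
$$
\langle u,v\rangle_{\h} = \sum_{k\in \Z^{3}} (|k|^{2}+m^{2})^{s}\, c_{k}(u)\,\overline{c_{k}(v)}
$$
the family $\{\phi_{k}\}$ is orthogonal in $\h$; after grouping the $\phi_{k}$'s belonging to the same eigenvalue into an orthonormal basis of each finite-dimensional eigenspace, one obtains the sequence $\{u_{\ell}\}$ with the required properties.

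For (v), I would proceed by the standard variational characterization. Given $u\in \mathbb{P}_{\ell}\setminus\{0\}$, expand $u=\sum_{j\geq \ell} \alpha_{j}\, u_{j}$ in the orthogonal basis of $\h$ given by (iv); since the $u_{j}$ are also $L^{2}$-orthonormal, one computes
$$
|u|_{\h}^{2} = \sum_{j\geq \ell} \lambda_{j}\,|\alpha_{j}|^{2} \geq \lambda_{\ell} \sum_{j\geq \ell}|\alpha_{j}|^{2} = \lambda_{\ell}\,|u|_{2}^{2},
$$
and the infimum is attained at $u=u_{\ell}$, giving the Rayleigh formula; finiteness of the multiplicity has already been settled in (i).

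The main obstacle is essentially bookkeeping: turning the countable multiset $\{(|k|^{2}+m^{2})^{s}\}_{k\in\Z^{3}}$ into a well-ordered sequence while retaining enough information to identify multiplicities and to pass from the complex Fourier basis $\{\phi_{k}\}$ to a real orthonormal basis $\{u_{\ell}\}$ of $L^{2}(-\pi,\pi)^{3}$ that is simultaneously $\h$-orthogonal. Apart from this, no new analytic ingredient beyond the Fourier definition of $(-\Delta+m^{2})^{s}$ recalled in \eqref{nfrls} is needed.
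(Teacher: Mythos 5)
Your argument is correct, but it follows a genuinely different route from the paper's. The paper reduces the whole lemma to abstract spectral theory: it proves that $(-\Delta+m^{2})^{-s}$ is a self-adjoint, positive, compact operator on $L^{2}(-\pi,\pi)^{3}$ (self-adjointness and positivity via the Fourier expansion, compactness via the compact embedding $\h \hookrightarrow L^{2}(-\pi,\pi)^{3}$), and then invokes the standard spectral theorem for compact self-adjoint operators together with the functional calculus for powers to deduce (i)--(v) in one stroke. You instead work entirely concretely: you exhibit $\phi_{k}=e^{\imath k\cdot x}/(2\pi)^{3/2}$ as explicit eigenfunctions with eigenvalues $(|k|^{2}+m^{2})^{s}$, observe that any $L^{2}$-eigenfunction must have Fourier support on a single level set $\{|k|^{2}=n\}$, count multiplicities via finiteness of $\{k\in\Z^{3}: |k|^{2}=n\}$, and then verify (ii)--(v) by hand, including the Rayleigh quotient computation in the $\h$-orthogonal eigenbasis. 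The two approaches buy different things: the paper's is shorter and transfers verbatim to settings where one has compactness of the embedding but no explicit eigenbasis, while yours avoids any appeal to abstract operator theory, makes the relation $\lambda_{\ell}=\mu_{\ell}^{s}$ and the Rayleigh principle completely transparent, and in particular makes (ii)--(iii) immediate rather than a consequence of general functional calculus. One small bookkeeping point worth making explicit in your writeup: when you expand $u\in\mathbb{P}_{\ell}$ as $u=\sum_{j\ge\ell}\alpha_{j}u_{j}$, the constraint $\langle u,u_{j}\rangle_{\h}=0$ for $j<\ell$ kills the low modes because $\langle u,u_{j}\rangle_{\h}=\alpha_{j}\lambda_{j}$ with $\lambda_{j}>0$, and you use that $|u_{j}|_{\h}^{2}=\lambda_{j}|u_{j}|_{2}^{2}=\lambda_{j}$ since $u_{j}$ is $L^{2}$-normalized and supported on a single Fourier level — both are true, but stating them removes any ambiguity about the mixing of the $L^{2}$- and $\h$-normalizations.
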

\begin{proof}
It is enough to prove that 
$$
(-\Delta+m^{2})^{-s}:L^{2}(-\pi,\pi)^{3} \rightarrow L^{2}(-\pi,\pi)^{3}
$$
is a self-adjoint, positive  and compact operator.

Firstly, we observe that if $u=\sum_{k\in \Z^{3}} \frac{c_{k}}{(2\pi)^{\frac{3}{2}}} e^{\imath k\cdot x}$ and $v=\sum_{k\in \Z^{3}} \frac{d_{k}}{(2\pi)^{\frac{3}{2}}} e^{\imath  k\cdot x}$ belong to $L^{2}(-\pi,\pi)^{3}$, then
\begin{align*}
\langle(-\Delta+m^{2})^{-s}u, v\rangle_{L^{2}(-\pi, \pi)^{3}}&=\sum_{k\in \Z^{3}} \frac{c_{k}}{(|k|^{2}+m^{2})^{s}} \bar{d}_{k}\\
&=\sum_{k\in \Z^{N}} c_{k}\frac{\bar{d}_{k}}{(|k|^{2}+m^{2})^{s}}\\
&=\langle u, (-\Delta+m^{2})^{-s}v\rangle_{L^{2}(-\pi, \pi)^{3}}
\end{align*}
that is $(-\Delta+m^{2})^{-s}$ is self-adjoint.

Clearly, for $u=\sum_{k\in \Z^{3}} \frac{c_{k}}{(2\pi)^{\frac{3}{2}}} e^{\imath  k\cdot x}\in L^{2}(-\pi,\pi)^{3}$ we have
$$
\langle(-\Delta+m^{2})^{-s}u, u\rangle_{L^{2}(-\pi, \pi)^{3}}=\sum_{k\in \Z^{3}} \frac{|c_{k}|^{2}}{(|k|^{2}+m^{2})^{s}}\geq 0,
$$
and $\langle (-\Delta+m^{2})^{-s}u, u\rangle_{L^{2}(-\pi, \pi)^{3}}>0$ if $u\neq 0$.\\
Finally, we show that $(-\Delta+m^{2})^{-s}$ is compact. Let $\{v_{j}\}_{j\in \N}$ be a bounded sequence in $L^{2}(-\pi, \pi)^{3}$ and let us denote by $\{d^{j}_{k}\}_{k\in \Z^{3}}$ its Fourier coefficients. Since $L^{2}(-\pi, \pi)^{3}\subset \mathbb{H}_{2\pi}^{-s}$, it is clear that $\{v_{j}\}_{j\in \N}$ is bounded in $\mathbb{H}_{2\pi}^{-s}$.
Let us denote by $u_{j}=(-\Delta+m^{2})^{-s} v_{j}$. \\
Thus, 
$$ 
|u_{j}|^{2}_{\h}=\sum_{k\in \Z^{3}}(|k|^{2}+m^{2})^{s} \frac{|d^{j}_{k}|^{2}}{(|k|^{2}+m^{2})^{2s}}=|v_{j}|_{\mathbb{H}^{-s}_{T}}^{2},
$$
that is $\{u_{j}\}_{j\in \N}$ is a bounded sequence in $\h$. Using the compactness of $\h$ into $L^{2}(-\pi, \pi)^{3}$, we deduce that $\{u_{j}\}_{j\in \N}$ admits a convergent subsequence in $L^{2}(-\pi, \pi)^{3}$. Accordingly, $(-\Delta+m^{2})^{-s} v_{j}$ strongly converges in $L^{2}(-\pi, \pi)^{3}$.
\end{proof}

Next, we aim to find some useful relations between the eigenvalues $\{\lambda_{j}\}_{j\in\N}$ of $(-\Delta+m^{2})^{s}$ and the corresponding extended eigenvalue problem in the half-cylinder $\mathcal{S}_{2\pi}$,
\begin{equation}\label{ERP}
\left\{
\begin{array}{ll}
-\dive(y^{1-2s} \nabla V)+m^{2}y^{1-2s}V =0 &\mbox{ in }\A, \\
\smallskip
V_{| {\{x_{i}=0\}}}= V_{| {\{x_{i}=T\}}} & \mbox{ on } \partial_{L}\A,\\
\smallskip
{\partial_{\nu}^{1-2s} V}=\kappa_{s} \lambda_{j} \T(V)   &\mbox{ on }\partial^{0}\A.
\end{array}
\right.
\end{equation}
Let us introduce the following notations. Set
\begin{equation}\label{defvh}
\mathbb{V}_{h}:=Span\{V_{1}, \dots, V_{h}\},
\end{equation}
where every $V_{j}$ solves (\ref{ERP}).
Clearly, $\T(V_{j})=u_{j}$ for all $j\in \N$, where $\{u_{j}\}_{j\in\N}$ is the basis of eigenfunctions in $\h$, defined in Lemma \ref{lemmino}. For any $h\in \N$, we define
\begin{equation}\label{defvhort}
\mathbb{V}_{h}^{\perp}:=\{V\in \X: \langle V, V_{j} \rangle_{\X} =0, \mbox{ for } j=1, \dots, h\}.
\end{equation}
Since $V_{j}$ solves (\ref{ERP}), then we deduce that
$$
\mathbb{V}_{h}^{\perp}=\{V\in \X: \langle \T(V), \T(V_{j}) \rangle_{L^{2}(-\pi, \pi)^{3}} =0, \mbox{ for } j=1, \dots, h\}.
$$
Hence, $ \X=\mathbb{V}_{h}\bigoplus \mathbb{V}_{h}^{\perp}$. Let us stress that the trace operator is bijective on 
$$
E:=\{V\in \X: V \mbox{ solves } (\ref{extPu})\}.
$$
Indeed, if $\tilde{V}_{1}$ and $\tilde{V}_{2}$ are the extension of $\tilde{u}_{1},\tilde{u}_{2} \in \h$ respectively, then
\begin{equation}\label{*}
\langle \tilde{V}_{i}, \varPhi \rangle_{\X} = k_{s} \langle \tilde{u}_{i}, \T(\varPhi) \rangle_{\h} \quad \forall \varPhi \in \X, i=1,2.
\end{equation}
If $\tilde{u}_{1}=\T(\tilde{V}_{1})= \T(\tilde{V}_{2})=\tilde{u}_{2}$, it follows from (\ref{*}) that
$$
\langle \tilde{V}_{1}-\tilde{V}_{2}, \varPhi \rangle_{\X}=0 \quad \forall \varPhi \in \X,
$$
so we deduce that $\tilde{V}_{1}=\tilde{V}_{2}$, that is $\T$ is injective on $E$. From this and the linearity of the trace operator $\T$, we get
$$
\dim \mathbb{V}_{h}=\dim Span\{\T(V_{1}), \cdots, \T(V_{h})\}=h.
$$
Now we prove that $\|\cdot\|_{\X}$ and $|\cdot|_{2}$ are equivalent norms on the finite dimensional space $\mathbb{V}_{h}$. More precisely, for any $V\in \mathbb{V}_{h}$, it holds
\begin{align}\label{eqnorm}
\kappa_{s}m^{2s} |\T(V)|_{2}^{2}\leq \|V\|_{\mathbb{X}^{s}_{2\pi}}^{2}\leq \kappa_{s}\lambda_{h} |\T(V)|_{2}^{2}.
\end{align}
Firstly, we note that $\{V_{j}\}_{j\in \N}$ is an orthogonal system in $\X$, since $\{\T(V_{j})\}_{j\in \N}$ is an orthonormal system in $L^{2}(-\pi, \pi)^{3}$, and $V_{j}$ satisfies
$$
\langle Z, V_{j} \rangle_{\X}=\kappa_{s} \lambda_{j}\langle \T(Z), \T(V_{j}) \rangle_{L^{2}(-\pi, \pi)^{3}} \mbox{ for all } Z\in \X, j\in \N.
$$
Then, using the fact that $\{\lambda_{j}\}_{j\in \N}$ is an increasing sequence (see $(i)$ of Lemma \ref{lemmino}), and the trace inequality (\ref{tracein}), for any $V=\sum_{j=1}^{h} \alpha_{j} V_{j}\in \mathbb{V}_{h}$ we have
\begin{align*}
\kappa_{s}m^{2s} |\T(V)|_{2}^{2}&\leq \|V\|_{\mathbb{X}^{s}_{2\pi}}^{2}=\sum_{j=1}^{h} \alpha_{j}^{2}\|V_{j}\|^{2}_{\X}\\
&=\kappa_{s}\sum_{j=1}^{h} \lambda_{j} \alpha_{j}^{2}|\T(V_{j})|^{2}_{2}\leq \kappa_{s}\lambda_{h}\sum_{j=1}^{h} \alpha_{j}^{2}|\T(V_{j})|^{2}_{2}\\
&= \kappa_{s}\lambda_{h} |\T(V)|_{2}^{2}. 
\end{align*}
Finally, we prove that for any $V\in \mathbb{V}_{h}^{\perp}$ the following inequality holds true
$$
\lambda_{h+1} |\T(V)|_{2}^{2}\leq \frac{1}{\kappa_{s}}\|V\|^{2}_{\X}.
$$
Fix $V\in \mathbb{V}_{h}^{\perp}$. Then $\T(V)\in \mathbb{P}_{h+1}$.
Indeed $\T(V_{j})=u_{j}$ is a weak solution to $(-\Delta+m^{2})^{s}u=\lambda_{j} u$ and using the fact that
$$
\langle \T(V), \T(V_{j}) \rangle_{L^{2}(-\pi, \pi)^{3}}=0 \mbox{ for every } j=1, \dots, h,
$$
we can infer that $\langle \T(V), \T(V_{j}) \rangle_{\h}=0,$ for every $j=1, \dots, h$.
Hence, in the light of the variational characterization $(v)$ of Lemma \ref{lemmino} and the trace inequality (\ref{tracein}), we get
\begin{equation}\label{C}
\lambda_{h+1} |\T(V)|_{2}^{2}\leq  |\T(V)|^{2}_{\h}\leq \frac{1}{\kappa_{s}}\|V\|^{2}_{\X}.
\end{equation}

\begin{remark}
In order to lighten the notation, we will assume that $\kappa_{s}=1$, and, with abuse of notation, we denote the trace of a function $U:\R^{3+1}_{+}\rightarrow \R$ by $u$, that is $u=\T(U)$.
\end{remark}

\subsection{Krasnoselskii genus}
In this section we recall the definition of the genus and some of its fundamental properties which will be used along the paper; see \cite{AR, Rab} for more details.\\
Let $E$ be a real Banach space and let $\mathcal{U}$ denote the class of sets $A\subset E\setminus \{0\}$ such that $A$ is closed in $E$ and symmetric with respect to the origin, i.e. $u\in A$ implies $-u\in A$. For $A\in \mathcal{U}$, 
we define the genus $\gamma(A)$ of $A$ by the smallest integer $k$ such that there exists an odd continuous mapping from A to $\R^{k}\setminus \{0\}$. If there does not exist such a $k$, we put  $\gamma(A)=\infty$. 
Moreover, we set  $\gamma(\emptyset)=0$. 
Then we collect the following results.
\begin{prop}\label{prop3.1}
Let $A, B\in \mathcal{U}$. Then we have
\begin{compactenum}[(i)]
\item If there exists an odd continuous mapping from $A$ to $B$, then $\gamma(A)\leq \gamma(B)$.
\item If there is an odd homeomorphism between $A$ and $B$, then $\gamma(A)= \gamma(B)$.
\item If $\gamma(B)<\infty$, then $\gamma(\overline{A\setminus B})\geq \gamma(A)-\gamma(B)$.
\item If $\mathbb{S}^{n}$ is the sphere in $\R^{n}$, then $\gamma(\mathbb{S}^{n})=n+1$.
\item If $A$ is compact, then $\gamma(A)<+\infty$, and there exists $\delta>0$ such that $N_{\delta}(A)\in \mathcal{U}$ and $\gamma(N_{\delta}(A))=\gamma(A)$, where $N_{\delta}(A)=\{x\in E: \|x-A\|\leq \delta \}$.
\end{compactenum}
\end{prop}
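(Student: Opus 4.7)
The plan is to verify the five items in order, relying on the classical toolkit of odd continuous maps combined with the Tietze extension theorem in a symmetric form: any odd continuous $\psi$ on a closed symmetric subset of $E$ can be extended to an odd continuous map on all of $E$ by first Tietze-extending componentwise to some $\tilde\psi$ and then replacing it by the odd average $\frac{1}{2}(\tilde\psi(x)-\tilde\psi(-x))$. For (i), if $\gamma(B)=k<\infty$ with realizing map $\psi:B\to\R^{k}\setminus\{0\}$, and $\varphi:A\to B$ is the given odd continuous map, then $\psi\circ\varphi:A\to\R^{k}\setminus\{0\}$ is odd and continuous, which gives $\gamma(A)\leq k$. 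The case $\gamma(B)=\infty$ is trivial, and (ii) follows immediately by applying (i) to an odd homeomorphism and to its inverse.

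For (iii), assume $k=\gamma(B)$ and $n=\gamma(\overline{A\setminus B})$ are both finite (else the statement is vacuous). Pick odd continuous $\varphi:B\to\R^{k}\setminus\{0\}$ and $\psi:\overline{A\setminus B}\to\R^{n}\setminus\{0\}$, and extend each to odd continuous $\tilde\varphi,\tilde\psi:E\to\R^{k}$ and $\R^{n}$ respectively, via the symmetric Tietze procedure above. Define $\Phi:A\to\R^{k+n}$ by $\Phi(x)=(\tilde\varphi(x),\tilde\psi(x))$; this is odd and continuous, and for each $x\in A$ either $x\in B$, in which case the first block $\tilde\varphi(x)=\varphi(x)$ is nonzero, or $x\in\overline{A\setminus B}$, in which case the second block $\tilde\psi(x)=\psi(x)$ is nonzero. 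Thus $\Phi$ avoids the origin on $A$, so $\gamma(A)\leq k+n$, i.e.\ $\gamma(\overline{A\setminus B})\geq \gamma(A)-\gamma(B)$.

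Property (iv) combines the trivial upper bound $\gamma(\mathbb{S}^{n})\leq n+1$ from the inclusion $\mathbb{S}^{n}\hookrightarrow\R^{n+1}\setminus\{0\}$ with the nontrivial lower bound: any odd continuous map $\mathbb{S}^{n}\to\R^{n}\setminus\{0\}$ would, after normalization $y\mapsto y/|y|$, yield an odd continuous $\mathbb{S}^{n}\to\mathbb{S}^{n-1}$, contradicting the Borsuk--Ulam theorem. This is the main analytic content of the proposition and the step I regard as the core obstacle; every other item is, in essence, bookkeeping with odd extensions, whereas (iv) hinges on a genuinely topological input.

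Finally, for (v), compactness of $A$ together with $0\notin A$ allows me to cover $A$ by finitely many closed balls $\overline{B_{r_{j}}(x_{j})}$ small enough that $\overline{B_{r_{j}}(x_{j})}\cap \overline{B_{r_{j}}(-x_{j})}=\emptyset$ and $0\notin \overline{B_{r_{j}}(x_{j})}$; the $j$-th signed cut-off (positive on a neighbourhood of $x_{j}$, negative on the reflected neighbourhood of $-x_{j}$) provides the $j$-th coordinate of an odd continuous map from a symmetric neighbourhood of $A$ into $\R^{m}\setminus\{0\}$, showing $\gamma(A)\leq m<\infty$. For the neighbourhood statement, fix $k=\gamma(A)$ and a realizing odd continuous $\psi:A\to\R^{k}\setminus\{0\}$; extend oddly by the symmetric Tietze procedure to $\tilde\psi:E\to\R^{k}$, and use continuity together with compactness of $A$ to choose $\delta>0$ so small that $\tilde\psi$ is nonvanishing on $N_{\delta}(A)$. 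Then $\gamma(N_{\delta}(A))\leq k$ by definition, while (i) gives the reverse inequality $\gamma(A)\leq \gamma(N_{\delta}(A))$, so equality holds.
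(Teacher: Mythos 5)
The paper does not prove Proposition \ref{prop3.1}; it simply records these five properties of the Krasnoselskii genus and cites \cite{AR, Rab} for the details. Your proposal supplies a self-contained proof, and it is correct. It also follows, item by item, the classical route used in those references: composition of odd continuous maps for (i) and (ii); the symmetric Tietze extension $\tilde\psi(x)=\tfrac12\bigl(\tilde\psi_{0}(x)-\tilde\psi_{0}(-x)\bigr)$ together with the concatenated map $\Phi=(\tilde\varphi,\tilde\psi)$ for the subadditivity estimate in (iii); Borsuk--Ulam, after normalization, as the genuinely topological input for the lower bound in (iv); and a finite cover by antipodally-disjoint balls plus a non-vanishing odd extension on a $\delta$-neighbourhood for (v). Two small bookkeeping remarks: in (iii), if $\gamma(\overline{A\setminus B})=\infty$ the inequality is trivially true rather than vacuous (and if $A\setminus B=\emptyset$ the inequality follows from (i) with the inclusion $A\hookrightarrow B$ and the convention $\gamma(\emptyset)=0$); and in (v), you should also shrink $\delta$ so that $N_{\delta}(A)$ avoids the origin, which is possible because $\mathrm{dist}(0,A)>0$ by compactness of $A$ and $0\notin A$, so that $N_{\delta}(A)\in\mathcal{U}$ as required. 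Modulo these cosmetic points, the argument is sound and is exactly the kind of proof the paper is pointing to when it defers to \cite{AR, Rab}.
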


\begin{thm}\label{Clarke}\cite{Clark}
Let $\J\in \mathcal{C}^{1}(X, \R)$ be a functional satisfying the Palais-Smale condition. Furthermore, let us suppose that
\begin{compactenum}[$(i)$]
\item $\J$ is bounded from below and even,
\item there is a compact set $K\in \mathcal{U}$ such that $\gamma(K)=k$ and $\sup_{x\in K} \J(x)<\J(0)$.
\end{compactenum}
Then, $\J$ possesses at least k pairs of distinct critical points and their corresponding critical values are less than $\J(0)$.
\end{thm}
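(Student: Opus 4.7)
The plan is to follow the classical minimax approach of Clark, using the Krasnoselskii genus to construct $k$ distinct critical values below $\mathcal{J}(0)$.

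First, for each $j \in \{1, 2, \dots, k\}$ I would introduce the family
$$
\Sigma_{j} := \{A \in \mathcal{U} : \gamma(A) \geq j\}
$$
and define the candidate critical values
$$
c_{j} := \inf_{A \in \Sigma_{j}} \sup_{u \in A} \mathcal{J}(u).
$$
Since $K \in \mathcal{U}$ with $\gamma(K) = k \geq j$, we have $K \in \Sigma_{j}$, so hypothesis $(ii)$ yields $c_{j} \leq \sup_{K} \mathcal{J} < \mathcal{J}(0)$. The fact that $\mathcal{J}$ is bounded from below guarantees $c_{j} > -\infty$, and clearly $\Sigma_{j+1} \subset \Sigma_{j}$ gives the chain of inequalities
$$
-\infty < c_{1} \leq c_{2} \leq \cdots \leq c_{k} < \mathcal{J}(0).
$$

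Next, I would verify that each $c_{j}$ is a critical value of $\mathcal{J}$. Assume by contradiction that some $c_{j}$ is regular. Setting $c = c_{j}$, the Palais-Smale condition together with the evenness of $\mathcal{J}$ allows one to invoke the equivariant (i.e., odd) deformation lemma: there exist $\varepsilon > 0$ and an odd homeomorphism $\eta : X \to X$ such that $\eta(\mathcal{J}^{c+\varepsilon}) \subset \mathcal{J}^{c-\varepsilon}$, where $\mathcal{J}^{a} := \{u : \mathcal{J}(u) \leq a\}$. Picking $A \in \Sigma_{j}$ with $\sup_{A} \mathcal{J} < c + \varepsilon$, the set $\eta(A)$ remains closed, symmetric, bounded away from $0$ (since $\mathcal{J}(0) > c+\varepsilon$ forces $0 \notin \mathcal{J}^{c+\varepsilon}$ when $\varepsilon$ is small), and by property $(ii)$ of Proposition \ref{prop3.1} still satisfies $\gamma(\eta(A)) \geq \gamma(A) \geq j$; hence $\eta(A) \in \Sigma_{j}$, which contradicts $\sup_{\eta(A)} \mathcal{J} \leq c - \varepsilon < c_{j}$.

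To conclude, I would handle the case where some of the $c_{j}$ coincide. If $c_{j} = c_{j+1} = \cdots = c_{j+p-1} =: c$ for some $p \geq 2$, denote $K_{c} := \{u \in X : \mathcal{J}'(u) = 0, \mathcal{J}(u) = c\}$, which is compact by the Palais-Smale condition and symmetric by evenness; I claim $\gamma(K_{c}) \geq p$. Otherwise, by $(v)$ of Proposition \ref{prop3.1} there is a symmetric neighborhood $N_{\delta}(K_{c})$ with $\gamma(N_{\delta}(K_{c})) = \gamma(K_{c}) \leq p-1$; applying the deformation lemma to push $\mathcal{J}^{c+\varepsilon} \setminus N_{\delta}(K_{c})$ into $\mathcal{J}^{c-\varepsilon}$ and using $(iii)$ of Proposition \ref{prop3.1}, one finds $A \in \Sigma_{j+p-1}$ with $\sup_{A} \mathcal{J} < c$, contradicting the definition of $c_{j+p-1}$. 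In particular $K_{c}$ contains infinitely many points, more than enough pairs. Summing the multiplicities across the distinct values in the sequence $c_{1} \leq \cdots \leq c_{k}$ yields at least $k$ pairs of distinct critical points, all at levels strictly below $\mathcal{J}(0)$.

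The main obstacle I expect is the multiplicity step when several $c_{j}$ coincide: one must carefully combine the odd deformation lemma with the three properties of the genus (subadditivity, stability under odd maps, and the compact-neighborhood property) to transfer ``many equal minimax values'' into ``genus-many critical points''. Everything else — the boundedness of $c_{j}$, their ordering, and the comparison with $\mathcal{J}(0)$ — follows directly from hypotheses $(i)$ and $(ii)$.
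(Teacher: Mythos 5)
The paper does not prove Theorem \ref{Clarke}; it is stated as a known result and attributed to Clark \cite{Clark}, so there is no internal proof to compare against. Your proposal is a correct rendition of the standard Lusternik--Schnirelmann/Krasnoselskii-genus minimax argument: defining $c_{j}=\inf_{A\in\Sigma_{j}}\sup_{A}\mathcal{J}$, establishing $-\infty<c_{1}\leq\cdots\leq c_{k}<\mathcal{J}(0)$ via hypotheses $(i)$--$(ii)$, using the equivariant (odd) deformation lemma together with Proposition \ref{prop3.1}$(ii)$ to show each $c_{j}$ is a critical value, and estimating $\gamma(K_{c})$ from below when values coincide. One small imprecision in the multiplicity step: after choosing $A\in\Sigma_{j+p-1}$ with $\sup_{A}\mathcal{J}<c+\varepsilon$, removing $N_{\delta}(K_{c})$ and deforming, the resulting set $\eta\bigl(\overline{A\setminus N_{\delta}(K_{c})}\bigr)$ has genus $\geq(j+p-1)-(p-1)=j$ (by Proposition \ref{prop3.1}$(iii)$ and $(ii)$) and sup at most $c-\varepsilon$, so the contradiction is against the definition of $c_{j}$ (which equals $c$), not against $c_{j+p-1}$ as you wrote. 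Apart from that slight misstatement the argument is sound and matches Clark's original proof in structure.
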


\begin{prop}\label{prop3.5}
If $K\in \mathcal{U}$, $0\notin K$ and $\gamma(K)\geq 2$, then $K$ has infinitely many points.
\end{prop}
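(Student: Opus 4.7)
The plan is to argue by contradiction: I assume that $K$ is a finite set and show that this forces $\gamma(K)\leq 1$, which contradicts the hypothesis $\gamma(K)\geq 2$.

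Suppose $K$ has only finitely many points. Because $K\in \mathcal{U}$, the set $K$ is symmetric with respect to the origin and $0\notin K$, so its elements can be paired up as
\begin{equation*}
K=\{x_{1},-x_{1},x_{2},-x_{2},\dots ,x_{n},-x_{n}\},
\end{equation*}
with $x_{i}\neq 0$ for every $i$ and $x_{i}\neq \pm x_{j}$ when $i\neq j$. Being finite in a Banach space, $K$ is discrete; in particular, every subset of $K$ is both open and closed in the relative topology of $K$.

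Next, I would exhibit an odd continuous map $\varphi:K\to \R\setminus \{0\}$ witnessing $\gamma(K)\leq 1$. Simply set
\begin{equation*}
\varphi(x_{i})=1,\qquad \varphi(-x_{i})=-1,\qquad i=1,\dots ,n.
\end{equation*}
Since $K$ carries the discrete topology, $\varphi$ is automatically continuous; it is odd by construction, and its image is contained in $\R\setminus \{0\}$. By the very definition of the Krasnoselskii genus this shows $\gamma(K)\leq 1$, contradicting $\gamma(K)\geq 2$. Hence $K$ cannot be finite, and the conclusion follows.

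The argument is essentially set-theoretic once one observes the discreteness of a finite set in a Banach space; the only possible subtlety is the pairing step, which requires $0\notin K$ (to ensure $x_{i}\neq -x_{i}$) and the symmetry of $K$ under $x\mapsto -x$ (to make the assignment $\varphi$ well-defined and odd). Both are precisely the hypotheses built into the class $\mathcal{U}$, so there is no real obstacle beyond being careful about these two points.
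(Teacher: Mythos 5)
Your argument is correct and is the standard proof of this classical fact; note however that the paper does not prove Proposition \ref{prop3.5} at all — it is simply recalled from the references \cite{AR, Rab} in the preliminaries on the Krasnoselskii genus. Your contradiction via the discrete odd map $\varphi:K\to\R\setminus\{0\}$ with $\varphi(\pm x_i)=\pm 1$ is exactly the expected elementary reasoning, and the two hypotheses you flag ($0\notin K$ to ensure $x_i\neq -x_i$, symmetry to make $\varphi$ odd and well-defined) are indeed the essential ones.
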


\section{\bf Proofs of Theorem \ref{thm1} and Theorem \ref{thm3}}\label{Section3}

\noindent
In this section we provide the proofs of Theorem \ref{thm1} and Theorem \ref{thm3}. Firstly, we consider the case $2<p<4$.
We  begin by proving some auxiliary lemmas.
\begin{lem}\label{lem4.1}
The functional $\J$ is bounded from below.
\end{lem}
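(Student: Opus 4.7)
The plan is a direct coercivity/boundedness argument based on the fact that in the regime $1<q<2<p<4$ the quartic Kirchhoff term $\tfrac{b}{4}\|U\|_{\X}^{4}$ eventually dominates every negative contribution in $\J$.

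First I would invoke the continuous trace embedding from Theorem \ref{compacttracethm}: since $1<q<p<4<2^{*}_{s}$ (recall $s\in(3/4,1)$ gives $2^{*}_{s}>4$), there exist constants $C_{q},C_{p}>0$ such that
\begin{equation*}
|\T(U)|_{q}\leq C_{q}\|U\|_{\X},\qquad |\T(U)|_{p}\leq C_{p}\|U\|_{\X}\qquad\text{for every }U\in\X.
\end{equation*}
Consequently,
\begin{equation*}
\J(U)\;\geq\;\frac{a}{2}\|U\|_{\X}^{2}+\frac{b}{4}\|U\|_{\X}^{4}-\frac{\lambda C_{q}^{q}}{q}\|U\|_{\X}^{q}-\frac{C_{p}^{p}}{p}\|U\|_{\X}^{p}.
\end{equation*}

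Next I would apply Young's inequality in the form $t^{r}\leq \e t^{4}+C(\e,r)$ valid for all $t\geq 0$ whenever $0<r<4$, to both the subquadratic term (with $r=q$) and the subquartic term (with $r=p$). Choosing $\e=\e(b,\lambda,q,p,C_{q},C_{p})>0$ small enough that
\begin{equation*}
\e\left(\frac{\lambda C_{q}^{q}}{q}+\frac{C_{p}^{p}}{p}\right)\leq \frac{b}{8},
\end{equation*}
I obtain
\begin{equation*}
\J(U)\;\geq\;\frac{a}{2}\|U\|_{\X}^{2}+\frac{b}{8}\|U\|_{\X}^{4}-K,
\end{equation*}
for some constant $K=K(a,b,\lambda,s,p,q)>0$, which proves the claim.

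Alternatively, and perhaps cleaner for presentation, one can set $t=\|U\|_{\X}$ and observe that the real-valued function $f(t)=\tfrac{a}{2}t^{2}+\tfrac{b}{4}t^{4}-\tfrac{\lambda C_{q}^{q}}{q}t^{q}-\tfrac{C_{p}^{p}}{p}t^{p}$ is continuous on $[0,\infty)$, vanishes at $t=0$, and tends to $+\infty$ as $t\to\infty$ because $q<2$ and $p<4$; hence it attains a finite minimum, which is the desired uniform lower bound for $\J$. There is no real obstacle here: the assumption $p<4$ is precisely what makes the Kirchhoff quartic dominate the nonlinearity, and the assumption $q<2<p$ ensures the embeddings used are subcritical, so the argument does not require any compactness or sharp constant beyond Theorem \ref{compacttracethm}.
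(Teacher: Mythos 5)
Your proof is correct and follows essentially the same route as the paper: both use the trace embedding of Theorem \ref{compacttracethm} to bound $\J(U)$ from below by a one-variable function of $\|U\|_{\X}$ and then exploit $q<2<p<4$ so that the quartic Kirchhoff term dominates. The paper states the conclusion as coercivity of $\J$ (which, since $\J$ is bounded on bounded balls by the same embeddings, gives boundedness from below), while you make the finite infimum explicit via Young's inequality; this is only a cosmetic difference.
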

\begin{proof}
Let us prove that $\J$ is coercive. Assume that $\{U_{n}\}_{n\in \N}\subset \X$ is a sequence such that $\|U_{n}\|_{\X}\rightarrow \infty$. Then, using Theorem \ref{compacttracethm}, we can see that
\begin{align*}
\J(U_{n})\geq \frac{b}{4}\|U_{n}\|^{4}_{\X}-\frac{\lambda}{qS_{q}^{q/2}}\|U_{n}\|^{q}_{\X}-\frac{\lambda}{pS_{p}^{p/2}}\|U_{n}\|^{p}_{\X},
\end{align*}
where
$$
S_{r}:=\inf_{U\in \X\setminus\{0\}} \frac{\|U\|^{2}_{\X}}{|u|_{r}^{2}} \mbox{ with } r\in \{p,q\}.
$$
Since $1<q<2<p<4$, we can conclude that $\J(U_{n})\ri \infty$.
\end{proof}

\begin{lem}\label{lem4.2}
The functional $\J$ satisfies the Palais-Smale condition at any level $c\in \R$.
\end{lem}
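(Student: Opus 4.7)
The plan is to combine the coercivity proved in Lemma \ref{lem4.1} with the compactness of the trace embedding $\T(\X) \hookrightarrow L^{r}(-\pi,\pi)^{3}$ for $1 \leq r < \2$ (Theorem \ref{compacttracethm}) to reduce the Palais--Smale condition to a standard monotonicity argument. Under the hypotheses of this section ($2 < p < 4$, together with $s \in (3/4,1)$ which forces $\2 = 6/(3-2s) > 4$), both exponents $q$ and $p$ are strictly subcritical, which will make the nonlinear terms behave compactly.

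First, I take a Palais--Smale sequence $\{U_{n}\} \subset \X$ at level $c$, that is, $\J(U_{n}) \to c$ and $\J'(U_{n}) \to 0$ in the dual of $\X$. Coercivity of $\J$ (Lemma \ref{lem4.1}) immediately forces $\{U_{n}\}$ to be bounded in $\X$. Up to subsequences, I may therefore assume $U_{n} \rightharpoonup U$ in $\X$ for some $U \in \X$, $\|U_{n}\|^{2}_{\X} \to t_{0}$ for some $t_{0} \geq \|U\|^{2}_{\X}$, and, by Theorem \ref{compacttracethm}, $u_{n} := \T(U_{n}) \to u := \T(U)$ strongly in both $L^{q}(-\pi,\pi)^{3}$ and $L^{p}(-\pi,\pi)^{3}$.

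The crux is to test $\J'(U_{n})$ against $U_{n} - U$. Since $\{U_{n} - U\}$ is bounded in $\X$ and $\J'(U_{n}) \to 0$, we have $\langle \J'(U_{n}), U_{n} - U\rangle \to 0$, which expands to
\[
(a + b\|U_{n}\|^{2}_{\X})\langle U_{n}, U_{n} - U\rangle_{\X} = \lambda \int_{(-\pi,\pi)^{3}} |u_{n}|^{q-2} u_{n}(u_{n} - u)\, dx + \int_{(-\pi,\pi)^{3}} |u_{n}|^{p-2} u_{n}(u_{n} - u)\, dx + o(1).
\]
H\"older's inequality combined with the strong convergence $u_{n} \to u$ in $L^{p}$ and $L^{q}$ shows that the right-hand side tends to zero. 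Since $a + b\|U_{n}\|^{2}_{\X} \geq a > 0$ uniformly in $n$, this yields $\langle U_{n}, U_{n} - U\rangle_{\X} \to 0$. Combined with $\langle U, U_{n} - U\rangle_{\X} \to 0$ (from weak convergence), we conclude
\[
\|U_{n} - U\|^{2}_{\X} = \langle U_{n}, U_{n} - U\rangle_{\X} - \langle U, U_{n} - U\rangle_{\X} \longrightarrow 0,
\]
i.e.\ $U_{n} \to U$ strongly in $\X$, which is exactly the required Palais--Smale property.

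I do not foresee a genuine obstacle: the strict positivity $a > 0$ prevents the Kirchhoff coefficient from degenerating along the sequence, so no delicate identification of the limit $t_{0}$ is needed, and the strict subcriticality of both $q$ and $p$ supplies compactness on the trace for free. The more delicate compactness issues---the interplay between the Kirchhoff coefficient and possible concentration of $u_{n}$ at critical growth---are what will genuinely require work in Theorems \ref{thm3} and \ref{thm4}, but they do not affect the present lemma.
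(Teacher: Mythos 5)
Your proof is correct and follows essentially the same route as the paper: boundedness from Lemma \ref{lem4.1}, extraction of a weakly convergent subsequence with strong $L^{q}$ and $L^{p}$ trace convergence from Theorem \ref{compacttracethm}, testing $\J'(U_{n})$ against $U_{n}-U$, and using $a+b\|U_{n}\|^{2}_{\X}\geq a>0$ together with the Hilbert-space identity to conclude strong convergence. The only cosmetic difference is that you spell out the final splitting $\|U_{n}-U\|^{2}_{\X}=\langle U_{n}, U_{n}-U\rangle_{\X}-\langle U, U_{n}-U\rangle_{\X}$, which the paper leaves implicit.
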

\begin{proof}
Let $\{U_{n}\}_{n\in \N}\subset \X$ be a sequence such that $\J(U_{n})\ri c$ and $\J'(U_{n})\ri 0$ as $n\ri \infty$.
In view of Lemma \ref{lem4.1}, we can deduce that $\{U_{n}\}_{n\in \N}$ is bounded in $\X$. Then, up to a subsequence, we may assume that
\begin{align}\begin{split}\label{4.1}
&U_{n}\rightharpoonup U \mbox{ in } \X  \\
&u_{n}\ri u \mbox{ in } L^{r}(-\pi, \pi)^{3} \quad \forall r\in [1, \2) \\
&u_{n}\ri u \mbox{ a.e. in } (-\pi, \pi)^{3}. 
\end{split}\end{align}
Using $\langle \J'(U_{n}), U_{n}-U\rangle=o_{n}(1)$, we have
\begin{align}\label{4.2}
&(a+b\|U_{n}\|^{2}_{\X}) \iint_{\A} y^{1-2s} [\nabla U_{n} \nabla (U_{n}-U)+m^{2}U_{n}(U_{n}-U)]\, dx dy \nonumber \\
&=\lambda \int_{(-\pi, \pi)^{3}} |u_{n}|^{q-2}u_{n}(u_{n}-u)\, dx+\int_{(-\pi, \pi)^{3}} |u_{n}|^{p-2}u_{n}(u_{n}-u)\, dx+o_{n}(1).
\end{align}
By \eqref{4.1} it follows that
\begin{align*}
\int_{(-\pi, \pi)^{3}} |u_{n}|^{q-2}u_{n}(u_{n}-u)\, dx=\int_{(-\pi, \pi)^{3}} |u_{n}|^{p-2}u_{n}(u_{n}-u)\, dx=o_{n}(1)
\end{align*}
which together with \eqref{4.2} implies that 
\begin{align*}
(a+b\|U_{n}\|^{2}_{\X}) \iint_{\A} y^{1-2s} [\nabla U_{n} \nabla (U_{n}-U)+m^{2}U_{n}(U_{n}-U)]\, dx dy=o_{n}(1).
\end{align*}
Since $\{U_{n}\}_{n\in \N}$ is bounded in $\X$, we obtain
$$
a\leq (a+b\|U_{n}\|^{2}_{\X})\leq a+bC \quad \forall n\in \N,
$$
so that 
\begin{align*}
\iint_{\A} y^{1-2s} [\nabla U_{n} \nabla (U_{n}-U)+m^{2}U_{n}(U_{n}-U)]\, dx dy=o_{n}(1).
\end{align*}
Taking into account this fact, \eqref{4.1} and that $\X$ is a Hilbert space, we get the thesis.
\end{proof}

\begin{proof}[Proof of Theorem \ref{thm1}]
For each $k\in \N$, we consider $\mathbb{V}_{k}=span\{V_{1},\dots, V_{k}\}$ defined as in \eqref{defvh}. In view of \eqref{eqnorm} and that all norm of $L^{r}(-\pi, \pi)^{3}$ are equivalents on the finite dimensional spaces, we deduce that the norms of $\X$ and $L^{q}(-\pi, \pi)^{3}$ are equivalents on $\mathbb{V}_{k}$. Hence, there exists a positive constant $C_{k}$ (depending on $k$) such that
\begin{align}
C_{k}\|U\|^{q}_{\X}\leq |u|_{q}^{q} \quad \forall U\in \mathbb{V}_{k}.
\end{align}
Then, for all $R>0$ and $U\in \mathbb{V}_{k}$ with $\|U\|_{\X}\in [0, R]$, we have 
$$
\J(U)\leq C\|U\|^{2}_{\X}-\frac{\lambda}{q}C_{k} \|U\|^{q}_{\X},
$$
where $C:=\frac{a}{2}+\frac{b}{4}R^{2}>0$.
Take $R>0$ such that $CR^{2-q}<\frac{\lambda}{q} C_{k}$, and we set $\mathbb{S}:=\{U\in \mathbb{V}_{k}: \|U\|_{\X}=r\}$, where $0<r<R$. Consequently, for all $U\in \mathbb{S}$, we find
\begin{align*}
\J(U)\leq Cr^{2}-\frac{\lambda}{q}C_{k}r^{q}<R^{q}\left[CR^{2-q}-\frac{\lambda}{q}C_{k}\right]<0=\J(0).
\end{align*}
Since $\mathbb{V}_{k}$ and $\R^{k}$ are isomorphic and $\mathbb{S}$ and $\mathbb{S}^{k-1}$ are homeomorphic, we can deduce that $\gamma(S)=k$.
Moreover, $\J$ is even, so, invoking Theorem \ref{Clarke}, we get at least $k$ pairs of different critical points. From the arbitrariness of $k$, we obtain infinitely many critical points of $\J$.
\end{proof}

\noindent
Now, we assume that $p=4$. Since the proof of the next result can be obtained arguing as in Theorem \ref{thm1}, we only give a sketch of the idea.
\begin{proof}[Proof of Theorem \ref{thm3}]
It is easy to show that  $\J$ is coercive. Indeed, if $\{U_{n}\}_{n\in \N}\subset \X$ is a sequence such that $\|U_{n}\|_{\X}\ri \infty$, we can use $b>\frac{1}{S_{4}^{2}}$ and $1<q<2$ to see that
\begin{align}
\J(U_{n})\geq \frac{1}{4}\left(b-\frac{1}{S_{4}^{2}}\right)\|U_{n}\|^{4}_{\X}-\frac{\lambda}{qS_{q}^{q/2}}\|U_{n}\|^{q}_{\X}\ri \infty.
\end{align} 
Then, we can argue as in Lemma \ref{lem4.2} and Theorem \ref{thm1} to infer  that $\J$ admits infinitely many critical points in $\X$. This ends the proof of Theorem \ref{thm3}.
\end{proof}

\section{\bf Proof of Theorem \ref{thm2}}\label{Section4}
In this section we deal with the case $4<p<\2$. Since $\J$ can not be bounded from below, we overcome this difficulty borrowing some ideas introduced in \cite{AzP}.\\
By Theorem \ref{compacttracethm}, we can note that
\begin{align}\label{5.1}
\J(U)\geq \frac{a}{2}\|U\|^{2}_{\X}-\frac{\lambda}{qS_{q}^{q/2}}\|U\|^{q}_{\X}-\frac{1}{pS_{p}^{p/2}}\|U\|^{p}_{\X}=:g(\|U\|^{2}_{\X}),
\end{align}
where
$$
g(t)=\frac{a}{2}t-\frac{\lambda}{qS_{q}^{q/2}}t^{q/2}-\frac{1}{pS_{p}^{p/2}}t^{p/2}.
$$
Hence, there exists $\lambda_{*}>0$ such that, if $\lambda\in (0, \lambda_{*})$, then $g$ achieves its positive maximum.
Assume that $\lambda\in (0, \lambda_{*})$. Let $R_{0}$ and $R_{1}$  be the first and second roots of $g$. Let $\phi\in \mathcal{C}^{\infty}_{c}([0, \infty))$ be such that $0\leq \phi\leq 1$, $\phi=1$ in $[0, R_0]$ and $\phi=0$ in $[R_1, \infty)$. Now, we consider the following truncated functional
$$
\I(U)=\frac{a}{2}\|U\|^{2}_{\X}+\frac{b}{4}\|U\|^{4}_{\X}-\frac{\lambda}{q}|u|^{q}_{q}-\phi(\|U\|^{2}_{\X}) \frac{1}{p}|u|^{p}_{p}.
$$
As in \eqref{5.1}, we can deduce that $\I(U)\geq \bar{g}(\|U\|^{2}_{\X})$ where
$$
\bar{g}(t)=\frac{a}{2}t-\frac{\lambda}{qS_{q}^{q/2}}t^{q/2}-\phi(t)\frac{1}{pS_{p}^{p/2}}t^{p/2}.
$$
Let us observe that $\I$ is coercive, even and satisfies the Palais-Smale condition at any level $c\in \R$. Moreover, arguing as in the proof of Theorem \ref{thm1} with $R<R_{0}$, we can see that
$$
\sup_{U\in \mathbb{S}} \I(U)=\sup_{U\in \mathbb{S}} \J(U)<0=\J(0)=\I(0),
$$
and then $\I$ has infinitely many critical points. Therefore, in order to prove Theorem \ref{thm3}, it is enough to verify that the critical points of $\I$ are the critical points of $\J$.
Note that each critical point $U$ of $\I$ fulfills $\bar{g}(\|U\|^{2}_{\X})\leq \I(U)<0$. Hence, $\|U\|^{2}_{\X}<R_{0}$ and $\I(U)=\J(U)$. On the other hand, from the continuity of $\I$, we can find a neighborhood $N$ of $U$ such that $\I(U)<0$ for all $U\in N$. Accordingly, $\I(U)=\J(U)<0$ for all $U\in N$, and we can deduce that $\I'(U)=\J'(U)=0$, that is $U$ is a critical point of $\J$.
 
\section{\bf Proof of Theorem \ref{thm4}}\label{Section5}
This last section is devoted to the critical case, that is when $p=\2$  in \eqref{P}. As in Section $3$, the functional $\J$ can not be bounded from below. Moreover, an extra difficulty arises in the study of \eqref{P} due to the lack of compactness of the embedding $\h$ in $L^{\2}(-\pi, \pi)^{3}$.
For this purpose, we give a variant of the concentration-compactness lemma in periodic framework; see \cite{BCPS, DMV, PP} for some related results.
In order to prove it, we first give the following definition:
\begin{defn}
We say that a sequence $\{U_{n}\}_{n\in \N}$ is tight in $\X$ if for every $\delta>0$ there exists $R>0$ such 
that $\int_{R}^{\infty} \int_{(-\pi, \pi)^{3}} y^{1-2s}(|\nabla U_{n}|^{2}+m^{2}U_{n}^{2}) \, dx dy\leq \delta$ for any $n\in \mathbb{N}$.
\end{defn}

\begin{lem}\label{CCL}
Let $\{U_{n}\}_{n\in \N}$ be a bounded tight sequence in $\X$, such that $U_{n}$ converges weakly to $U$ in $\X$.
Let $\mu, \nu$ be two non-negative measures on $\R^{3+1}_{+}$ and $\R^{3}$ respectively and such that
\begin{align}
&\lim_{n\rightarrow \infty} y^{1-2s} (|\nabla U_{n}|^{2}+m^{2}U_{n}^{2})= \mu \label{46FS1}
\end{align}
and
\begin{align}
&\lim_{n\rightarrow \infty} |u_{n}|^{\2}= \nu \label{46FS2}
\end{align}
in the sense of measures. Then, there exist an at most countable set $I$ and three families $\{x_{i}\}_{i\in I}\subset (-\pi, \pi)^{3}$, $\{\mu_{i}\}_{i\in I}$,  $\{\nu_{i}\}_{i\in I}\subset (0, \infty)$, with $\mu_{i}, \nu_{i}\geq 0$ for all $i\in I$, such that
\begin{align}
&\nu=|u|^{\2}+\sum_{i\in I} \nu_{i} \delta_{x_{i}} \label{47FS}\\
&\mu\geq y^{1-2s} (|\nabla U|^{2}+m^{2}U^{2})+\sum_{i\in I} \mu_{i} \delta_{x_{i}} \label{48FS} \\
&\mu_{i}\geq S_{*} \nu_{i}^{\frac{2}{2^{*}_{s}}} \quad \mbox{ for all }  i\in I.  \label{49FS} 
\end{align}
\end{lem}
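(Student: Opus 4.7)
The strategy is a variant of Lions' concentration-compactness argument adapted to the half-cylinder $\A$ with the $A_{2}$-weight $y^{1-2s}$, relying crucially on the trace and compact embedding results stated in Theorems \ref{tracethm} and \ref{compacttracethm}.

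\textbf{Step 1: reduction to $U=0$.} Set $W_{n}:=U_{n}-U$. Then $W_{n}\rightharpoonup 0$ in $\X$, $\{W_{n}\}$ remains tight, and the Brezis--Lieb lemma applied in $\X$ and in $L^{\2}(-\pi,\pi)^{3}$ produces nonnegative measures $\tilde{\mu}$ and $\tilde{\nu}$ with
\begin{equation*}
\mu=y^{1-2s}(|\nabla U|^{2}+m^{2}U^{2})+\tilde{\mu},\qquad \nu=|u|^{\2}+\tilde{\nu},
\end{equation*}
$\tilde{\mu}$ and $\tilde{\nu}$ being the weak-$\ast$ limits of $y^{1-2s}(|\nabla W_{n}|^{2}+m^{2}W_{n}^{2})$ and $|w_{n}|^{\2}$. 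Thus \eqref{47FS}--\eqref{49FS} reduce to proving the atomic decomposition $\tilde{\nu}=\sum_{i\in I}\nu_{i}\delta_{x_{i}}$, $\tilde{\mu}\geq \sum_{i\in I}\mu_{i}\delta_{x_{i}}$ with $\mu_{i}\geq S_{*}\nu_{i}^{2/\2}$.

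\textbf{Step 2: fundamental inequality.} For any $\Phi\in\mathcal{C}^{\infty}(\overline{\A})$ which is $2\pi$-periodic in $x$ and compactly supported in the $y$-variable, the Sobolev trace inequality (combining \eqref{tracein} with the fractional Sobolev embedding into $L^{\2}$) gives
\begin{equation*}
S_{*}\,|\T(\Phi W_{n})|_{\2}^{2}\leq \|\Phi W_{n}\|_{\X}^{2}.
\end{equation*}
Expanding the right-hand side by the product rule produces the ``good'' term $\iint_{\A}\Phi^{2}y^{1-2s}(|\nabla W_{n}|^{2}+m^{2}W_{n}^{2})\,dxdy$, plus a term bounded by $\iint_{\A}y^{1-2s}|\nabla \Phi|^{2}W_{n}^{2}\,dxdy$ and a cross term. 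Since $W_{n}\rightharpoonup 0$ in $\X$ and $\X$ is locally compactly embedded in $L^{2}(\A,y^{1-2s})$ on any bounded slab, these two latter terms are $o_{n}(1)$. Passing to the limit as $n\to\infty$, for every admissible $\Phi$,
\begin{equation*}
S_{*}\left(\int_{(-\pi,\pi)^{3}}|\T\Phi|^{\2}\,d\tilde{\nu}\right)^{2/\2}\leq \iint_{\overline{\A}}\Phi^{2}\,d\tilde{\mu}.
\end{equation*}

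\textbf{Step 3: atomic decomposition.} Test the above with $\Phi_{\epsilon}(x,y):=\varphi(x)\eta_{\epsilon}(y)$, where $\varphi\in\mathcal{C}^{\infty}_{2\pi}(\R^{3})$ and $\eta_{\epsilon}$ is a smooth cutoff with $\eta_{\epsilon}(0)=1$ and support in $[0,\epsilon)$. Define the measure $\rho_{\epsilon}$ on $(-\pi,\pi)^{3}$ by $\int \varphi^{2}\,d\rho_{\epsilon}:=\iint \varphi^{2}(x)\eta_{\epsilon}(y)^{2}\,d\tilde{\mu}(x,y)$. The inequality becomes the reverse-Hölder condition $S_{*}\,|\varphi|_{L^{\2}(\tilde{\nu})}^{2}\leq |\varphi|_{L^{2}(\rho_{\epsilon})}^{2}$, valid for every $\varphi$. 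Lions' classical lemma then forces $\tilde{\nu}$ to be purely atomic, $\tilde{\nu}=\sum_{i\in I}\nu_{i}\delta_{x_{i}}$ with $I$ at most countable and $x_{i}\in (-\pi,\pi)^{3}$, and yields $\rho_{\epsilon}(\{x_{i}\})\geq S_{*}\nu_{i}^{2/\2}$ for every $i$. Letting $\epsilon\to 0^{+}$ delivers $\mu_{i}:=\tilde{\mu}(\{(x_{i},0)\})\geq S_{*}\nu_{i}^{2/\2}$, whence $\tilde{\mu}\geq \sum_{i}\mu_{i}\delta_{x_{i}}$; undoing the Step~1 reduction yields \eqref{47FS}--\eqref{49FS}.

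\textbf{Main obstacle.} The delicate point is the $o_{n}(1)$-control of the cross and $|\nabla \Phi|^{2}$ terms in Step~2, which requires the locally compact embedding of $\X$ into the weighted space $L^{2}(\A,y^{1-2s})$ on bounded slabs, a weighted compactness statement that must be carefully extracted from the regularity theory for the degenerate operator $-\dive(y^{1-2s}\nabla\cdot)+m^{2}y^{1-2s}$. An additional subtlety is that $2\pi$-periodicity requires the measures to be interpreted on $\mathbb{T}^{3}=\R^{3}/(2\pi\Z)^{3}$ so that atoms are not double-counted at the boundary of the fundamental cell $(-\pi,\pi)^{3}$.
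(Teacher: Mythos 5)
Your proposal is correct and follows essentially the same route as the paper's proof: prove the reverse H\"older inequality via the trace Sobolev inequality \eqref{tracein}, control the gradient and cross terms through the local compact embedding of $\X$ into $L^{2}(K,y^{1-2s})$, reduce to $W_{n}=U_{n}-U$ via Brezis--Lieb, and invoke Lions' concentration-compactness lemma to extract the atomic decomposition. The only cosmetic difference is the order of operations (you subtract $U$ first, the paper handles the $U\equiv 0$ case first and then applies it to $W_{n}$) and your Step~3 makes explicit the passage through the pushed-forward measure $\rho_{\epsilon}$ and the limit $\epsilon\to 0^{+}$, which the paper abbreviates to ``invoking \cite{Lions}.''
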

\begin{proof}
We first suppose that $U\equiv 0$. We claim that for all $V\in \mathcal{C}^{\infty}_{2\pi}(\overline{\R^{3+1}_{+}})$ such that $\supp(V)$ is compact, it holds
\begin{equation}\label{DMV1}
C\left(\int_{(-\pi, \pi)^{3}} |v|^{\2} d\nu\right)^{\frac{2}{\2}}\leq  \iint_{\A} V^{2} \, d\mu
\end{equation}
for some constant $C>0$. For this purpose, we fix  $V\in \mathcal{C}^{\infty}_{2\pi}(\overline{\R^{3+1}_{+}})$ such that $K:=\supp(V)$ is compact. By Theorem \ref{compacttracethm} we know that
\begin{equation}\label{DMV2}
S_{*}  \left(\int_{(-\pi, \pi)^{3}} |vu_{n}|^{\2} dx \right)^{\frac{2}{\2}}\leq \iint_{\A} y^{1-2s} [|\nabla (VU_{n})|^{2}+m^{2}(VU_{n})^{2}] \, dx dy.
\end{equation}
Now, we note that \eqref{46FS2} implies that
\begin{align}\label{DMV3}
\int_{(-\pi, \pi)^{3}} |vu_{n}|^{\2} dx \ri \int_{(-\pi, \pi)^{3}}  |v|^{\2} d\nu.
\end{align}
On the other hand,
\begin{align}\label{DMV4}
&\iint_{\A} y^{1-2s} [|\nabla (VU_{n})|^{2}+m^{2}(VU_{n})^{2}] \, dx dy \nonumber \\
&=\iint_{\A} y^{1-2s} V^{2}[|\nabla U_{n}|^{2}+m^{2}U_{n}^{2}] \, dx dy +\iint_{\A} y^{1-2s} U_{n}^{2} |\nabla V|^{2} \, dx dy \nonumber \\
&+2 \iint_{\A} y^{1-2s} U_{n}V\nabla V \nabla U_{n}\, dx dy. 
\end{align}
Since $H^{1}(K, y^{1-2s})$ is compactly embedded in $L^{2}(K, y^{1-2s})$ (see \cite{DMV}), we have that $U_{n}\ri 0$ in $L^{2}(K, y^{1-2s})$ which yields
\begin{equation}\label{DMV5}
\iint_{\A} y^{1-2s} U_{n}^{2} |\nabla V|^{2} \, dx dy\leq C\iint_{K} y^{1-2s} U_{n}^{2}  \, dx dy \ri 0.
\end{equation}
Then, from H\"older's inequality, the boundedness of $\{U_{n}\}_{n\in \N}$ in $\X$ and \eqref{DMV5}, we get
\begin{align}\label{DMV6}
&\left| \iint_{\A} y^{1-2s} U_{n}V\nabla V \nabla U_{n}\, dx dy\right|  \nonumber \\
&\leq \left(\iint_{\A} y^{1-2s} U^{2}_{n}|\nabla V|^{2}\, dx dy\right)^{\frac{1}{2}} \left(\iint_{\A} y^{1-2s} V^{2} |\nabla U_{n}|^{2}\, dx dy\right)^{\frac{1}{2}} \nonumber\\
&\leq C  \left(\iint_{\A} y^{1-2s} U^{2}_{n} \, dx dy\right)^{\frac{1}{2}} \left(\iint_{\A} y^{1-2s} |\nabla U_{n}|^{2}\, dx dy\right)^{\frac{1}{2}} \nonumber\\
&\leq C\left(\iint_{K} y^{1-2s} U_{n}^{2}\, dx dy\right)^{\frac{1}{2}}\ri 0.
\end{align}
Now, taking into account \eqref{46FS1}, we have 
\begin{equation}\label{DMV7}
\iint_{\A} y^{1-2s} V^{2}(|\nabla U_{n}|^{2}+m^{2}U_{n}^{2}) \, dx dy\ri \iint_{\A}  V^{2} \, d\mu.
\end{equation}
Putting together \eqref{DMV2}-\eqref{DMV7}, we can conclude that \eqref{DMV1} holds true with $C=S_{*}$. Now, assume $U\not \equiv 0$. Then we can apply the above result to the sequence $W_{n}=U_{n}-U$. Indeed, if
\begin{align}
&\lim_{n\rightarrow \infty} y^{1-2s} (|\nabla W_{n}|^{2}+m^{2}W_{n}^{2})=\tilde{\mu} \label{46FS1w}\\
&\lim_{n\rightarrow \infty} |w_{n}|^{\2}=\tilde{\nu} \label{46FS2w},
\end{align}
in the sense of measures, then it holds
\begin{equation}
S_{*}\left(\int_{(-\pi, \pi)^{3}} |v|^{\2} d\tilde{\nu}\right)^{\frac{2}{\2}}\leq  \iint_{\A} y^{1-2s} (|\nabla V|^{2}+m^{2}V^{2}) \, d\tilde{\mu}
\end{equation}
 for all $V\in \mathcal{C}^{\infty}_{2\pi}(\overline{\R^{3+1}_{+}})$ such that $\supp(V)$ is compact.
By the Brezis-Lieb Lemma \cite{BL} we get
$$
|vw_{n}|_{\2}^{\2}=|vu_{n}|_{\2}^{\2}-|vu|_{\2}^{\2}+o_{n}(1),
$$
which in view of \eqref{46FS2} and \eqref{46FS2w} implies that
$$
\int_{(-\pi, \pi)^{3}} |v|^{\2} d\tilde{\nu}=\int_{(-\pi, \pi)^{3}} |v|^{\2} d\nu-\int_{(-\pi, \pi)^{3}} |vu|^{\2} dx, 
$$
that is
\begin{equation}\label{nu}
\nu=\tilde{\nu}+|u|^{\2}.
\end{equation}
On the other hand,
\begin{align*}
\iint_{\A} y^{1-2s} V^{2} (|\nabla U_{n}|^{2}+m^{2}U_{n}^{2}) \, dx dy &=\iint_{\A} y^{1-2s} V^{2} (|\nabla U|^{2}+m^{2}U^{2}) \, dx dy \nonumber \\
&\quad+\iint_{\A} y^{1-2s} V^{2} (|\nabla W_{n}|^{2}+m^{2}W_{n}^{2}) \, dx dy \nonumber \\
&\quad+2 \iint_{\A} y^{1-2s} V^{2}(\nabla W_{n} \nabla V+m^{2}W_{n}V) \, dx dy. 
\end{align*}
Letting $n\ri \infty$ and using \eqref{46FS1} and \eqref{46FS1w} we have
$$
\iint_{\A} V^{2}d\mu=\iint_{\A} y^{1-2s} V^{2} (|\nabla U|^{2}+m^{2}U^{2}) \, dx dy+\iint_{\A} V^{2}d\tilde{\mu},
$$
that is
\begin{equation}\label{mu}
\mu=\tilde{\mu}+y^{1-2s} (|\nabla U|^{2}+m^{2}U^{2}).
\end{equation}
Therefore, invoking \cite{Lions}, there exist an at most countable set $I$, a family of distinct points $\{x_{i}\}_{i\in I}\subset (-\pi, \pi)^{3}$ and $\{\mu_{i}\}_{i\in I},  \{\nu_{i}\}_{i\in I}\subset (0, \infty)$ with $\mu_{i}, \nu_{i}\geq 0$ for all $i\in I$ such that
\begin{align*}
\tilde{\nu}=\sum_{i\in I} \nu_{i} \delta_{x_{i}} \quad \tilde{\mu}\geq \sum_{i\in I} \mu_{i} \delta_{x_{i}} 
\end{align*}
and $\tilde{\mu}_{i}\geq S_{*} \tilde{\nu}_{i}^{\frac{2}{2^{*}_{s}}}$  for all $i\in I$. Taking into account \eqref{nu} and \eqref{mu} we get the desired result.

\end{proof}

\noindent
Arguing as in Section $4$, we consider the truncated functional $\I$ which is bounded from below. Now, we verify that $\J$ fulfills a local Palais-Smale condition.
\begin{lem}\label{lem7.1}
Let $\{U_{n}\}_{n\in \N}\subset \X$ be a bounded sequence such that $\J(U_{n})\ri c$ and $\J'(U_{n})\ri 0$, where $c$ is such that
$$
c<\left(\frac{1}{\theta}-\frac{1}{\2} \right)(aS_{*})^{3/2s}- \left[ \frac{\left(\frac{1}{q}-\frac{1}{\theta} \right)|(-\pi, \pi)^{3}|^{\frac{\2-q}{\2}}}{\left(\frac{1}{\theta}-\frac{1}{\2} \right)} \right]^{\frac{\2}{\2-q}} \left[\left(\frac{q}{\2}\right)^{\frac{\2}{\2-q}}-\left(\frac{q}{\2}\right)^{\frac{q}{\2-q}} \right]\lambda^{\frac{\2}{\2-q}}
$$
and $\theta\in (4, \2)$ is a fixed constant.
Then, there exists $\lambda_{*}>0$ such that, for all $\lambda\in (0, \lambda_{*})$, $\{U_{n}\}_{n\in \N}$ admits a convergent subsequence in $\X$.
\end{lem}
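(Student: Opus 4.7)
The strategy is a concentration--compactness argument carried out in the periodic extension framework. Since $\{U_n\}_{n\in\N}$ is bounded in $\X$, extract a subsequence with $U_n \rightharpoonup U$ in $\X$, $u_n \to u$ strongly in $L^r(-\pi,\pi)^3$ for every $r \in [1, \2)$ by Theorem \ref{compacttracethm}, and $u_n \to u$ pointwise a.e. Tightness of $\{U_n\}$ in $\X$ (inherited from the boundedness and the weighted decay in $y$ of the extension, via the exponential decay of each eigenfunction extension) permits an application of Lemma \ref{CCL}, yielding a countable set $I$ with atoms $\{x_i\} \subset (-\pi,\pi)^3$ and weights $\mu_i, \nu_i$ satisfying \eqref{47FS}--\eqref{49FS}.

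\smallskip
The key step is to show $I = \emptyset$. Fix $i \in I$ and take a standard smooth cutoff $\psi_\rho$ on $\overline{\R^{3+1}_+}$ with compact support, $0 \leq \psi_\rho \leq 1$, $\psi_\rho \equiv 1$ on $B_\rho^+(x_i, 0)$, $\supp \psi_\rho \subset B_{2\rho}^+(x_i, 0)$, and $\rho |\nabla \psi_\rho| \leq C$. Evaluate $\J'(U_n)[\psi_\rho^2 U_n] = o(1)$, expand the gradient product, and use the compact embedding $H^1(B_{2\rho}^+, y^{1-2s}) \hookrightarrow L^2(B_{2\rho}^+, y^{1-2s})$ (as invoked in the proof of Lemma \ref{CCL}) together with Cauchy--Schwarz to eliminate the cross term $\iint y^{1-2s}\psi_\rho U_n \nabla \psi_\rho \cdot \nabla U_n$, while the strong $L^q$ convergence of $u_n$ kills the $L^q$-contribution. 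Letting $n\to\infty$ through \eqref{46FS1}, \eqref{46FS2}, then $\rho \to 0^+$ through \eqref{47FS}, \eqref{48FS}, produces
\[
(a + b B_\infty)\,\mu_i \leq \nu_i, \qquad B_\infty := \lim_{n}\|U_n\|^2_\X \geq 0.
\]
Dropping the nonnegative $b$-contribution and combining with \eqref{49FS} gives $a S_* \nu_i^{2/\2} \leq \nu_i$, so either $\nu_i = 0$ or $\nu_i \geq (a S_*)^{\2/(\2-2)} = (aS_*)^{3/(2s)}$.

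\smallskip
Next, with $\theta \in (4, \2)$ fixed, expand
\[
\J(U_n) - \tfrac{1}{\theta} \langle \J'(U_n), U_n \rangle = \bigl(\tfrac{1}{2} - \tfrac{1}{\theta}\bigr) a \|U_n\|^2_\X + \bigl(\tfrac{1}{4} - \tfrac{1}{\theta}\bigr) b \|U_n\|^4_\X - \lambda \bigl(\tfrac{1}{q} - \tfrac{1}{\theta}\bigr) |u_n|_q^q + \bigl(\tfrac{1}{\theta} - \tfrac{1}{\2}\bigr) |u_n|_{\2}^{\2}.
\]
For $\theta \in (4, \2)$ every prefactor is positive except the $L^q$ one. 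Passing to $n \to \infty$ via Fatou on the $\X$-norms, strong $L^q$ convergence, and \eqref{47FS} for the critical term, and assuming $I \neq \emptyset$ so that $\sum_{i\in I} \nu_i \geq (aS_*)^{3/(2s)}$, we arrive at
\[
c \geq \bigl(\tfrac{1}{\theta} - \tfrac{1}{\2}\bigr)(aS_*)^{3/(2s)} + \bigl(\tfrac{1}{\theta} - \tfrac{1}{\2}\bigr) |u|_{\2}^{\2} - \lambda \bigl(\tfrac{1}{q} - \tfrac{1}{\theta}\bigr) |u|_q^q.
\]
Estimating $|u|_q^q \leq |(-\pi,\pi)^3|^{(\2-q)/\2}|u|_{\2}^q$ by H\"older and invoking Young's inequality with conjugate exponents $\2/q$ and $\2/(\2-q)$ to absorb $|u|_{\2}^q$ against $|u|_{\2}^{\2}$ reproduces exactly the explicit $\lambda^{\2/(\2-q)}$ correction on the right-hand side of the hypothesis, contradicting the bound on $c$. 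Hence $I = \emptyset$. Then $|u_n|_{\2} \to |u|_{\2}$ and, together with pointwise convergence, Brezis--Lieb upgrades this to $u_n \to u$ in $L^{\2}(-\pi,\pi)^3$. Finally, $\langle \J'(U_n), U_n - U \rangle = o(1)$ together with the strong convergence of the two nonlinear terms in their duals, the positivity $a + b\|U_n\|^2_\X \geq a > 0$, and the Hilbert-space structure of $\X$ (arguing as in Lemma \ref{lem4.2}) yield $U_n \to U$ strongly in $\X$.

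\smallskip
\emph{Main obstacle.} The delicate part is the atomic estimate in the cutoff step: the Kirchhoff prefactor $a + b\|U_n\|_\X^2$ must be tracked through the limit, the weighted gradient of the extension across the free boundary $\partial^0 \A$ has to be controlled, and the local weighted compact embedding is essential for killing the cross term. The subsequent Young's inequality bookkeeping then pins down the explicit admissible range $\lambda_*$.
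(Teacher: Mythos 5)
Your proposal follows essentially the same route as the paper (extract weakly convergent subsequence, establish tightness, invoke the concentration--compactness lemma, localize at each atom to show $\nu_i\geq (aS_*)^{3/(2s)}$, feed this into $\J(U_n)-\tfrac1\theta\langle\J'(U_n),U_n\rangle$ and optimize via H\"older/Young to contradict the level bound, then upgrade to strong $\X$-convergence). The minor variation of testing against $\psi_\rho^2 U_n$ rather than $\Psi_\rho U_n$ is immaterial, and the end-game identification of the limit norm is an acceptable alternative to the paper's bookkeeping with $t_0$.

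However, there is a genuine gap in the tightness step. You assert that tightness of $\{U_n\}$ is ``inherited from the boundedness and the weighted decay in $y$ of the extension, via the exponential decay of each eigenfunction extension.'' This is not correct: a generic bounded sequence in $\X$ need not be tight (mass in the extension variable $y$ can leak to infinity along a bounded sequence), and $U_n$ is not a finite combination of eigenfunction extensions, so exponential decay of individual eigenfunction extensions does not transfer. Tightness in the paper is a nontrivial \emph{consequence of the Palais--Smale condition}: the argument proceeds by contradiction, choosing a cutoff $\psi=\psi(y)$ supported far out in $y$ and testing $\langle\J'(U_n),\psi U_n\rangle=o_n(1)$, noting that $\psi U_n$ has vanishing trace so the nonlinear boundary terms drop out; a pigeonhole argument over annuli $(-\pi,\pi)^3\times I_k$ together with the local compact embedding $H^1((-\pi,\pi)^3\times I_{k_0},y^{1-2s})\hookrightarrow L^{2\gamma}$ controls the cross term $\iint y^{1-2s}U_n\psi'\partial_y U_n$. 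Without $\J'(U_n)\to 0$ this control fails. You should replace the throwaway parenthetical with this argument (or an equivalent one); as written, the proof cannot invoke Lemma \ref{CCL} because its hypothesis is not verified.
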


\begin{proof}
Our claim is to apply Lemma \ref{CCL}. Firstly, we prove that $\{U_{n}\}_{n\in \N}$ is a tight sequence. 

Assume by contradiction that there exists $\delta_{0}>0$ such that, for any $R>0$ one has, up to a subsequence, 
\begin{equation}\label{newter}
\int_{R}^{\infty} \int_{(-\pi, \pi)^{3}} y^{1-2s} ( |\nabla U_{n}|^{2} +m^{2}|U_{n}|^{2}) \, dx dy >\delta_{0} \quad \forall n\in \mathbb{N}. 
\end{equation}
Fix $\e>0$, and let $r>0$ be such that  
$$
\int_{r}^{\infty} \int_{(-\pi, \pi)^{3}} y^{1-2s} ( |\nabla U|^{2} +m^{2}|U|^{2}) \, dx dy< \e.  
$$
Let $j= [\frac{M}{\e}]$ be the integer part of $\frac{M}{\e}$, where $M$ is such that $\|U_{n}\|_{\X}^{2}\leq M$ for any $n\in \mathbb{N}$, and define $I_{k}= \{y \in \R_{+} \, : \, r+ k \leq y\leq r+k+1 \}$, $k=0, 1, \dots, j$. It is clear that 
\begin{align*}
\sum_{k=0}^{j} \int_{I_{k}} \int_{(-\pi, \pi)^{3}} y^{1-2s} ( |\nabla U_{n}|^{2} +m^{2}|U_{n}|^{2}) \, dx dy\leq \|U_{n}\|_{\X}^{2} \leq M < \e (j+1), 
\end{align*}
and then we can find $k_{0}\in \{0, \dots, j\}$ such that, up to a subsequence, 
\begin{align*}
\int_{I_{k_{0}}} \int_{(-\pi, \pi)^{3}} y^{1-2s} ( |\nabla U_{n}|^{2} +m^{2}|U_{n}|^{2}) \, dx dy \leq \e \quad \forall n\in \mathbb{N}. 
\end{align*}
Let $\psi(y)$ be a nondecreasing cut-off function in $[0, \infty)$ such that $\psi=0$ in $[0, r+k_{0}]$ and $\psi=1$ in $(r+k_{0}+1, \infty)$. Define $V_{n}(x, y)= U_{n}(x, y) \psi(y)$. Since $\langle \J'(U_{n}), V_{n}\rangle=o_{n}(1)$ and $V_{n}(x, 0)=0$, it follows that 
\begin{align*}
(a+b\|U_{n}\|^{2}_{\X}) &\iint_{\A} y^{1-2s} \psi(|\nabla U_{n}|^{2}+m^{2}U_{n}^{2})\, dx dy\\
&=-(a+b\|U_{n}\|^{2}_{\X})\iint_{\A} y^{1-2s} U_{n} \psi'\partial_{y} U_{n} \, dx dy+o_{n}(1).
\end{align*}
Using H\"older's inequality, the boundedness of $\{U_{n}\}_{n\in \mathbb{N}}$ in $\X$, and that $H^{1}((-\pi, \pi)^{3}\times I_{k_{0}}, y^{1-2s})\subset L^{2\gamma}((-\pi, \pi)^{3}\times I_{k_{0}},  y^{1-2s})$ where $\gamma=1+\frac{2}{3-2s}$ (see \cite{DMV}), we get
\begin{align*}
&\left| \int_{I_{k_{0}}} \int_{(-\pi, \pi)^{3}} y^{1-2s} U_{n} \psi'\partial_{y} U_{n} \, dx dy \right|\\
&\leq C\left(\iint_{\A} y^{1-2s} |\nabla U_{n}|^{2} \, dx dy  \right)^{\frac{1}{2}} \left(\int_{I_{k_{0}}} \int_{(-\pi, \pi)^{3}} y^{1-2s} U_{n}^{2} \, dx dy  \right)^{\frac{1}{2}} \\
&\leq C \left(\int_{I_{k_{0}}} \int_{(-\pi, \pi)^{3}} y^{1-2s} |U_{n}|^{2\gamma} \, dx dy  \right)^{\frac{1}{2\gamma}} \left(\int_{I_{k_{0}}} \int_{(-\pi, \pi)^{3}} y^{1-2s} \, dx dy  \right)^{\frac{1}{2\gamma'}} \\
&\leq C \left(\int_{I_{k_{0}}} \int_{(-\pi, \pi)^{3}}  y^{1-2s} (|\nabla U_{n}|^{2}+m^{2}U_{n}^{2}) \, dx dy  \right)^{\frac{1}{2}}<  C\sqrt{\e}. 
\end{align*}
In view of the above estimates, and noting that $a\leq (a+b\|U_{n}\|^{2}_{\X})\leq a+bM$ for all $n\in \mathbb{N}$, we have
\begin{align*}
a \int_{I_{k_{0}}} \int_{(-\pi, \pi)^{3}} y^{1-2s} ( |\nabla U_{n}|^{2} +m^{2}|U_{n}|^{2}) \, dxdy< (a+bM) C\sqrt{\e}
\end{align*}
which contradicts \eqref{newter}. Therefore, $\{U_{n}\}_{n\in \N}$ is tight in $\X$.

Now, since $\{U_{n}\}_{n\in \N}$ is bounded, we may assume that 
\begin{align}
&\lim_{n\rightarrow \infty} y^{1-2s} (|\nabla U_{n}|^{2}+m^{2}U_{n}^{2})=\mu \label{46FS1l}\\
&\lim_{n\rightarrow \infty} |u_{n}|^{\2}= \nu \label{46FS2l}
\end{align}
in the sense of measures, where $\mu$ and $\nu$ are two non-negative measures on $\R^{3+1}_{+}$ and $\R^{3}$, respectively. 
In the light of Lemma \ref{CCL}, there exist an at most countable set $I$, a family of distinct points $\{x_{i}\}_{i\in I}\subset (-\pi, \pi)^{3}$ and $\{\mu_{i}\}_{i\in I},  \{\nu_{i}\}_{i\in I}\subset (0, \infty)$, with $\mu_{i}, \nu_{i}\geq 0$ for all $i\in I$, such that
\begin{align}
&\nu=|u|^{\2}+\sum_{i\in I} \nu_{i} \delta_{x_{i}} \label{47FSl}\\
&\mu\geq y^{1-2s} (|\nabla U|^{2}+m^{2}U^{2})+\sum_{i\in I} \mu_{i} \delta_{x_{i}} \label{48FSl} \\
&\mu_{i}\geq S_{*} \nu_{i}^{\frac{2}{2^{*}_{s}}} \quad \forall i\in I \label{49FSl}. 
\end{align}
For all $\rho>0$, we consider $\Psi_{\rho}(x,y):=\Psi(\frac{x-x_{i}}{\rho}, \frac{y}{\rho})$, where $\Psi\in \mathcal{C}^{\infty}_{2\pi}(\overline{\R^{3+1}_{+}})$ is such that $0\leq \Psi\leq 1$, $\Psi=1$ in $B^{+}_{1}(0,0)$, $\Psi=0$ in $\A\setminus B^{+}_{2}(0,0)$ and $|\nabla \Psi_{\rho}|\leq \frac{C}{\rho}$. Since $\{\Psi_{\rho}U_{n}\}_{n\in \N}$ is bounded in $\X$, we get $\langle \J'(U_{n}), \Psi_{\rho}U_{n}\rangle=o_{n}(1)$, that is
\begin{align}\label{7.1FS}
(a+b\|U_{n}\|^{2}_{\X}) & \iint_{\A} y^{1-2s} U_{n} \nabla\Psi_{\rho}\nabla U_{n} \, dx dy\nonumber \\
&=- (a+b\|U_{n}\|^{2}_{\X})\iint_{\A} y^{1-2s} \Psi_{\rho}(|\nabla U_{n}|^{2}+m^{2}U_{n}^{2})\, dx dy \nonumber\\
&\quad + \lambda \int_{(-\pi, \pi)^{3}} |u_{n}|^{q}\psi_{\rho}\, dx+\int_{(-\pi, \pi)^{3}} \psi_{\rho}|u_{n}|^{\2}\, dx+o_{n}(1).
\end{align}
Assume that $\|U_{n}\|_{\X}\ri t_{0}$.
Since $\psi_{\rho}$ has compact support, we can see that
$$
\lim_{\rho\ri 0} \lim_{n\ri \infty}\int_{(-\pi, \pi)^{3}} |u_{n}|^{q}\psi_{\rho}\, dx=0.
$$ 
On the other hand, by H\"older's inequality, $\{U_{n}\}_{n\in \mathbb{N}}$ is bounded in $\X$, $U_{n}\rightarrow U$ in $L^{2}(K, y^{1-2s})$ for all compact set $K\subset \R^{3+1}_{+}$,
\begin{align*}
&\lim_{\rho\ri 0}\limsup_{n\ri \infty}\left| \iint_{\A} y^{1-2s} U_{n} \nabla\Psi_{\rho}\nabla U_{n} \, dx dy \right|\\
&=\lim_{\rho\ri 0}\limsup_{n\ri \infty}\left| \iint_{B^{+}_{2\rho}(x_{i},0)} y^{1-2s} U_{n} \nabla\Psi_{\rho}\nabla U_{n} \, dx dy \right|\\
&\leq \lim_{\rho\ri 0}\limsup_{n\ri \infty}  \left(\iint_{B^{+}_{2\rho}(x_{i},0)} y^{1-2s} |\nabla U_{n}|^{2} \, dx dy  \right)^{\frac{1}{2}} \left(\iint_{B^{+}_{2\rho}(x_{i},0)} y^{1-2s} U_{n}^{2}|\nabla \Psi_{\rho}|^{2} \, dx dy  \right)^{\frac{1}{2}} \\
&\leq \lim_{\rho\ri 0} \frac{C}{\rho} \left(\iint_{B^{+}_{2\rho}(x_{i},0)} y^{1-2s} U^{2} \, dx dy  \right)^{\frac{1}{2}} \\
&\leq \lim_{\rho\ri 0} \frac{C}{\rho} \left(\iint_{B^{+}_{2\rho}(x_{i},0)} y^{1-2s} |U|^{2\gamma} \, dx dy  \right)^{\frac{1}{2\gamma}}  \left(\iint_{B^{+}_{2\rho}(x_{i},0)} y^{1-2s} \, dx dy  \right)^{\frac{1}{2\gamma'}} \\
&\leq C \lim_{\rho\ri 0} \left(\iint_{B^{+}_{2\rho}(x_{i},0)} y^{1-2s} |U|^{2\gamma} \, dx dy  \right)^{\frac{1}{2\gamma}}= 0.
\end{align*}
Therefore, taking the limit as $n\ri \infty$ in \eqref{7.1FS}, by \eqref{46FS1l} and \eqref{46FS2l} we get
$$
(a+bt_{0}^{2}) \iint_{\A} \Psi_{\rho}\, d\mu\leq \int_{(-\pi, \pi)^{3}} \psi_{\rho} \, d\nu,
$$
and letting $\rho\ri 0$ we deduce that
\begin{align*}
\nu_{i}\geq (a+bt_{0}^{2})\mu_{i}\geq a\mu_{i}.
\end{align*}
In view of \eqref{49FSl}, $s\in (3/4,1)$ and $\theta\in (4, \2)$, we obtain that
\begin{align}\label{7.2FS}
\nu_{i}\geq (aS_{*})^{\frac{3}{2s}}\geq \left(\frac{1}{\theta}-\frac{1}{\2}\right)(aS_{*})^{\frac{3}{2s}}.
\end{align}
Now, we show that \eqref{7.2FS} can not occur, so that $I=\emptyset$. Assume by contradiction that \eqref{7.2FS} holds true for some $i\in I$.
Then, for all $n\in \mathbb{N}$, 
\begin{align*}
c&=\J(U_{n})-\frac{1}{\theta} \langle \J'(U_{n}), U_{n}\rangle+o_{n}(1) \\
&\geq -\lambda \left(\frac{1}{q}-\frac{1}{\theta}\right) \int_{(-\pi, \pi)^{3}}|u_{n}|^{q}\, dx+\left(\frac{1}{\theta}-\frac{1}{\2}\right) \int_{(-\pi, \pi)^{3}}|u_{n}|^{\2}\, dx+o_{n}(1) \\
&\geq -\lambda \left(\frac{1}{q}-\frac{1}{\theta}\right) \int_{(-\pi, \pi)^{3}}|u_{n}|^{q}\, dx+\left(\frac{1}{\theta}-\frac{1}{\2}\right) \int_{(-\pi, \pi)^{3}} \psi_{\rho} |u_{n}|^{\2}\, dx+o_{n}(1) \\
\end{align*}
and by passing to the limit as $n\ri \infty$ we have
\begin{align*}
c&\geq -\lambda \left(\frac{1}{q}-\frac{1}{\theta}\right) \int_{(-\pi, \pi)^{3}}|u|^{q}\, dx+\left(\frac{1}{\theta}-\frac{1}{\2}\right) \int_{(-\pi, \pi)^{3}} |u|^{\2}\, dx+\left(\frac{1}{\theta}-\frac{1}{\2}\right) \sum_{i\in I} \psi_{\rho}(x_{i})\nu_{i} \\
&=-\lambda \left(\frac{1}{q}-\frac{1}{\theta}\right) \int_{(-\pi, \pi)^{3}}|u|^{q}\, dx+\left(\frac{1}{\theta}-\frac{1}{\2}\right) \int_{(-\pi, \pi)^{3}} |u|^{\2}\, dx+\left(\frac{1}{\theta}-\frac{1}{\2}\right) \sum_{i\in I} \nu_{i} \\
&\geq  -\lambda \left(\frac{1}{q}-\frac{1}{\theta}\right) \int_{(-\pi, \pi)^{3}}|u|^{q}\, dx+\left(\frac{1}{\theta}-\frac{1}{\2}\right) \int_{(-\pi, \pi)^{3}} |u|^{\2}\, dx+\left(\frac{1}{\theta}-\frac{1}{\2}\right) (aS_{*})^{\frac{3}{2s}}.
\end{align*}
Applying H\"older's inequality we get
\begin{align*}
c&\geq \left(\frac{1}{\theta}-\frac{1}{\2}\right) (aS_{*})^{\frac{3}{2s}} -\lambda \left(\frac{1}{q}-\frac{1}{\theta}\right) |(-\pi, \pi)^{3}|^{\frac{\2-q}{\2}} \left(\int_{(-\pi, \pi)^{3}}|u|^{\2}\, dx  \right)^{\frac{q}{\2}}+ \left(\frac{1}{\theta}-\frac{1}{\2}\right) \int_{(-\pi, \pi)^{3}} |u|^{\2}\, dx.
\end{align*}
Now, observing that, for $t>0$, the function
$$
h(t)=\left(\frac{1}{\theta}-\frac{1}{\2}\right) t^{\2}-\lambda \left(\frac{1}{q}-\frac{1}{\theta}\right) |(-\pi, \pi)^{3}|^{\frac{\2-q}{\2}}t^{q}
$$
achieves its absolute minimum at the point
$$
t_{0}=\left[ \frac{q\lambda (\frac{1}{q}-\frac{1}{\theta}) |(-\pi, \pi)^{3}|^{\frac{\2-q}{\2}}}{\2\left( \frac{1}{\theta}-\frac{1}{\2} \right)} \right]^{\frac{1}{\2-q}}>0,
$$
we can deduce that
\begin{align*}
c&\geq \left(\frac{1}{\theta}-\frac{1}{\2} \right)(aS_{*})^{3/2s}- \left[ \frac{\left(\frac{1}{q}-\frac{1}{\theta} \right)|(-\pi, \pi)^{3}|^{\frac{\2-q}{\2}}}{\left(\frac{1}{\theta}-\frac{1}{\2} \right)} \right]^{\frac{\2}{\2-q}} \left[\left(\frac{q}{\2}\right)^{\frac{\2}{\2-q}}-\left(\frac{q}{\2}\right)^{\frac{q}{\2-q}} \right]\lambda^{\frac{\2}{\2-q}}
\end{align*}
which leads to a contradiction. Therefore, $I=\emptyset$ and $u_{n}\ri u$ in $L^{\2}(-\pi, \pi)^{3}$. Accordingly,
$$
\lim_{n\ri \infty} (a+b\|U_{n}\|^{2}_{\X}) \|U_{n}\|^{2}_{\X}=\lambda\int_{(-\pi, \pi)^{3}}|u|^{q}\, dx+\int_{(-\pi, \pi)^{3}}|u|^{\2}\,dx.
$$
On the other hand, from the weak convergence, we know that
$$
(a+bt_{0}^{2}) \|U\|^{2}_{\X}=\lambda\int_{(-\pi, \pi)^{3}}|u|^{q}\, dx+\int_{(-\pi, \pi)^{3}}|u|^{\2}\,dx
$$
which implies that
$$
(a+b\|U_{n}\|^{2}_{\X}) \|U_{n}\|^{2}_{\X}\ri  (a+bt_{0}^{2}) \|U\|^{2}_{\X}.
$$
Therefore, $ \|U_{n}\|^{2}_{\X}\ri \|U\|^{2}_{\X}$ as $n\ri \infty$, and recalling that $\X$ is a Hilbert space, we can conclude that $U_{n}\ri U$ in $\X$.
\end{proof}

\begin{remark}
By Lemma \ref{lem7.1}, we can deduce that 
$$
\left(\frac{1}{\theta}-\frac{1}{\2} \right)(aS_{*})^{3/2s}- \left[ \frac{\left(\frac{1}{q}-\frac{1}{\theta} \right)|(-\pi, \pi)^{3}|^{\frac{\2-q}{\2}}}{\left(\frac{1}{\theta}-\frac{1}{\2} \right)} \right]^{\frac{\2}{\2-q}} \left[\left(\frac{q}{\2}\right)^{\frac{\2}{\2-q}}-\left(\frac{q}{\2}\right)^{\frac{q}{\2-q}} \right]\lambda^{\frac{\2}{\2-q}}>0
$$
for $\lambda>0$ sufficiently small.
\end{remark}\label{Remark2}

\begin{lem}\label{lem7.2}
If $\I(U)<0$, then $\|U\|^{2}_{\X}<R_{0}$ and $\I(V)=\J(V)$ for all $V$ in a neighborhood of $U$. Moreover, $\I$ satisfies the Palais-Smale condition at any level $c<0$.
\end{lem}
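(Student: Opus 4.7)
The proof breaks into the three assertions, all relying on the pointwise lower bound $\I(U) \geq \bar{g}(\|U\|^{2}_{\X})$ already recorded in this section together with the truncation properties of $\phi$.

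For the first assertion, I start from $\I(U) < 0$ to deduce $\bar{g}(\|U\|^{2}_{\X}) < 0$, and then identify the negativity set of $\bar{g}$. On $[0, R_{0}]$ the cutoff satisfies $\phi \equiv 1$, hence $\bar{g} = g$, which vanishes at the endpoints and is strictly negative on the open interval $(0, R_{0})$. On $[R_{0}, R_{1}]$ we have $g \geq 0$ and $\phi \leq 1$, giving $\bar{g} \geq g \geq 0$. For $t \geq R_{1}$ the cutoff vanishes and $\bar{g}$ reduces to $h(t) := \frac{a}{2}t - \frac{\lambda}{qS_{q}^{q/2}} t^{q/2}$; using $g(R_{1})=0$ one finds $h(R_{1}) = \frac{1}{pS_{p}^{p/2}} R_{1}^{p/2} > 0$, and since the positive zero $t_{0}$ of $h$ tends to $0$ as $\lambda \to 0^{+}$ while $R_{1}$ stays bounded away from $0$, after possibly shrinking $\lambda_{*}$ one has $t_{0} < R_{1}$ and therefore $h > 0$ on $[R_{1}, \infty)$. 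Hence $\{t : \bar{g}(t) < 0\} \subset (0, R_{0})$, which forces $\|U\|^{2}_{\X} < R_{0}$.

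The second assertion then follows by continuity: $V \mapsto \|V\|^{2}_{\X}$ is continuous, so a neighborhood $N$ of $U$ in $\X$ satisfies $\|V\|^{2}_{\X} < R_{0}$ throughout. Since $\phi \equiv 1$ and $\phi' \equiv 0$ on $[0, R_{0}]$, the truncation term and its Fréchet differential coincide with the full contributions appearing in $\J$ and $\J'$, that is, $\I \equiv \J$ and $\I' \equiv \J'$ on $N$.

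Finally, for the Palais-Smale condition at level $c < 0$, let $\{U_{n}\} \subset \X$ satisfy $\I(U_{n}) \to c$ and $\I'(U_{n}) \to 0$. Since $\bar{g}(t) \to +\infty$ as $t \to \infty$ (on $[R_{1}, \infty)$ it equals the coercive $h$), $\I$ is coercive and $\{U_{n}\}$ is bounded in $\X$. For $n$ large, $\I(U_{n}) < 0$, so the first step yields $\|U_{n}\|^{2}_{\X} < R_{0}$ and the second gives $\I(U_{n}) = \J(U_{n})$, $\I'(U_{n}) = \J'(U_{n})$; hence $\{U_{n}\}$ is eventually a $(PS)_{c}$ sequence for $\J$ at the same negative level $c$. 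By the remark following Lemma \ref{lem7.1}, after one final adjustment of $\lambda_{*}$ the compactness threshold appearing there is strictly positive, so $c < 0$ lies below it and Lemma \ref{lem7.1} supplies a subsequence converging in $\X$. The main technical point is the sign analysis of $\bar{g}$ on $[R_{1}, \infty)$ in the first step, which is what ultimately controls the size of $\lambda_{*}$; everything else is routine continuity, coercivity, and invocation of the concentration-compactness lemma via Lemma \ref{lem7.1}.
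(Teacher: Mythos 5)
Your proof is correct and follows essentially the same route as the paper: deduce $\|U\|^{2}_{\X}<R_{0}$ from $\bar{g}(\|U\|^{2}_{\X})\leq \I(U)<0$, pass to a neighborhood by continuity to get $\I=\J$ and $\I'=\J'$, then combine coercivity of $\I$ with Lemma \ref{lem7.1} and Remark \ref{Remark2} for the Palais--Smale condition. One minor remark: the step ``after possibly shrinking $\lambda_{*}$ one has $t_{0}<R_{1}$'' is superfluous, since $h(R_{1})=\tfrac{1}{pS_{p}^{p/2}}R_{1}^{p/2}>0$ together with the fact that $h<0$ exactly on $(0,t_{0})$ already forces $R_{1}>t_{0}$ with no further restriction on $\lambda$.
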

\begin{proof}
Since $\bar{g}(\|U\|^{2}_{\X})\leq \I(U)<0$, we can argue as in the proof of Theorem \ref{thm2} to infer that $\I(V)=\J(V)$ for all $\|V\|_{\X}<\frac{R_{0}}{2}$. Moreover, if $\{U_{n}\}_{n\in \N}\subset \X$ is a sequence such that $\I(U_{n})\ri c<0$ and $\I'(U_{n})\ri 0$, then, for all $n$ sufficiently large, $\J(U_{n})=\I(U_{n})\ri c<0$ and $\J'(U_{n})=\I'(U_{n})\ri 0$. Due to the fact that $\I$ is coercive, we deduce that $\{U_{n}\}_{n\in \N}$ is bounded in $\X$. In virtue of Remark \ref{Remark2}, for $\lambda>0$ sufficiently small, we have
$$
c<0<\left(\frac{1}{\theta}-\frac{1}{\2} \right)(aS_{*})^{3/2s}- \left[ \frac{\left(\frac{1}{q}-\frac{1}{\theta} \right)|(-\pi, \pi)^{3}|^{\frac{\2-q}{\2}}}{\left(\frac{1}{\theta}-\frac{1}{\2} \right)} \right]^{\frac{\2}{\2-q}} \left[\left(\frac{q}{\2}\right)^{\frac{\2}{\2-q}}-\left(\frac{q}{\2}\right)^{\frac{q}{\2-q}} \right]\lambda^{\frac{\2}{\2-q}}
$$
so, by Lemma \ref{lem7.1}, it follows that, up to a subsequence, $U_{n}\ri U$ in $\X$.
\end{proof}

\begin{lem}\label{lem7.3}
Given $k\in \N$, there exists $\e=\e(k)>0$ such that $\gamma(\I^{-\e})\geq k$, where $\I^{-\e}=\{U\in \X: \I(U)\leq -\e\}$.
\end{lem}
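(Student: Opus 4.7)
\noindent\textbf{Proof proposal for Lemma \ref{lem7.3}.}
The plan is the standard Clark-type construction: exhibit, for each $k$, a compact symmetric subset of genus $k$ on which $\I$ is uniformly negative. The subspace $\mathbb{V}_{k}=\mathrm{span}\{V_{1},\dots,V_{k}\}$ defined in \eqref{defvh} is the natural candidate, since by \eqref{eqnorm} its restriction to $\X$ is $k$-dimensional and the trace operator is injective on it.

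First I would use the equivalence of all norms on the finite-dimensional space $\mathbb{V}_{k}$: combining \eqref{eqnorm} with the fact that $|\cdot|_{q}$ is a norm on $\mathrm{Tr}(\mathbb{V}_{k})\subset L^{q}(-\pi,\pi)^{3}$, one obtains a constant $C_{k}>0$ (depending on $k$ but not on $U$) such that
\begin{equation*}
|u|_{q}^{q}\geq C_{k}\,\|U\|_{\X}^{q}\qquad\text{for all }U\in\mathbb{V}_{k}.
\end{equation*}
Next, for $r\in(0,\sqrt{R_{0}})$, define $\mathbb{S}_{k,r}:=\{U\in\mathbb{V}_{k}:\|U\|_{\X}=r\}$. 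Since $r^{2}<R_{0}$, we have $\phi(\|U\|_{\X}^{2})\in[0,1]$ and $-\phi(\|U\|_{\X}^{2})\tfrac{1}{p}|u|_{p}^{p}\leq 0$ on $\mathbb{S}_{k,r}$, so
\begin{equation*}
\I(U)\leq\frac{a}{2}r^{2}+\frac{b}{4}r^{4}-\frac{\lambda\,C_{k}}{q}\,r^{q}\qquad\text{for all }U\in\mathbb{S}_{k,r}.
\end{equation*}
Because $1<q<2$, the right-hand side behaves like $-\frac{\lambda C_{k}}{q}r^{q}$ as $r\to 0^{+}$, so I can choose $r=r(k)>0$ small enough that the bound above is $\leq -\e(k)$ for some $\e(k)>0$. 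Consequently $\mathbb{S}_{k,r}\subset\I^{-\e(k)}$.

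It remains to compute the genus. The map $U\mapsto U/r$ is an odd homeomorphism from $\mathbb{S}_{k,r}$ onto the unit sphere of $(\mathbb{V}_{k},\|\cdot\|_{\X})$, and a linear isomorphism $\mathbb{V}_{k}\cong\R^{k}$ sends this unit sphere onto $\mathbb{S}^{k-1}$ through an odd homeomorphism. By Proposition \ref{prop3.1}$(ii)$ and $(iv)$, $\gamma(\mathbb{S}_{k,r})=\gamma(\mathbb{S}^{k-1})=k$. Finally, the set $\I^{-\e(k)}$ is closed (by continuity of $\I$) and symmetric (since $\I$ is even), hence belongs to $\mathcal{U}$, and by the monotonicity property of the genus in Proposition \ref{prop3.1}$(i)$, applied to the inclusion $\mathbb{S}_{k,r}\hookrightarrow \I^{-\e(k)}$, one concludes $\gamma(\I^{-\e(k)})\geq\gamma(\mathbb{S}_{k,r})=k$. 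The only nontrivial point is the norm-equivalence on $\mathbb{V}_{k}$, but this is already recorded in \eqref{eqnorm} together with the injectivity of $\T$ on $E$, so nothing further is needed.
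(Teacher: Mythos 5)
Your proposal is correct and follows essentially the same route as the paper: restrict to the finite-dimensional space $\mathbb{V}_{k}$, use the norm equivalence from \eqref{eqnorm} to dominate $\|U\|_{\X}^{q}$ by $|u|_{q}^{q}$, pick a small sphere of radius $r$ with $r^{2}<R_{0}$ on which $\I$ is uniformly negative (the paper phrases this as $\I=\J$ there, while you simply drop the nonpositive $\phi$-term, which amounts to the same estimate), identify the sphere with $\mathbb{S}^{k-1}$ via an odd homeomorphism to get genus $k$, and invoke monotonicity of the genus. No gaps.
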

\begin{proof}
Fix $k\in \N$ and let $\mathbb{V}_{k}$ be the set defined in \eqref{defvh}. Then there exists $C_{k}>0$ such that
$$
C_{k}\|U\|^{q}_{\X}\leq |u|^{q}_{q} \quad \forall U\in \mathbb{V}_{k}.
$$
Take $\bar{\rho}>0$ such that $\|U\|_{\X}=\bar{\rho}$ and $0<\|U\|^{2}_{\X}<R_{0}$, so that $\I(U)=\J(U)$. Proceeding as in the proof of Theorem \ref{thm1}, we can find $R>0$ such that $\J(U)<-\e$, for all $U\in \mathbb{V}_{k}$ with $U\in \mathbb{S}$. Hence, $\mathbb{S}\subset \I^{-\e}$, and noting that $ \I^{-\e}$ is a symmetric and closed set, by Proposition \ref{prop3.1} it follows that $\gamma(\I^{-\e})\geq \gamma(\mathbb{S})=k$.
\end{proof}

\noindent
Now, for all $k\in \N$, we define 
$$
\Gamma_{k}=\{C\subset \X: C \mbox{ is closed }, C=-C \mbox{ and } \gamma(C)\geq k\},
$$
$$
K_{c}=\{U\in \X: \I(U)=c \mbox{ and } \I'(U)=0\},
$$
and
$$
c_{k}=\inf_{C\in \Gamma_{k}} \sup_{U\in C} \I(u).
$$
Then we can prove that:
\begin{lem}\label{lem7.4}
Given $k\in \N$, the number $c_{k}$ is negative.
\end{lem}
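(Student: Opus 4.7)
The plan is to use Lemma \ref{lem7.3} as a direct input and deduce the bound by exhibiting an explicit admissible competitor for the minimax problem defining $c_{k}$.

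First I would fix $k\in\mathbb{N}$ and invoke Lemma \ref{lem7.3} to produce $\varepsilon=\varepsilon(k)>0$ such that $\gamma(\I^{-\varepsilon})\geq k$. I then need to verify that the sublevel set $\I^{-\varepsilon}=\{U\in\X:\I(U)\leq -\varepsilon\}$ actually belongs to $\Gamma_{k}$. Closedness follows from the continuity of $\I$ on $\X$ (it is a sum of continuous terms in the norm and in $L^{q}, L^{p}$ norms of the trace, the latter being continuous by Theorem \ref{compacttracethm}). Symmetry $\I^{-\varepsilon}=-\I^{-\varepsilon}$ follows from the fact that $\I$ is even, which is immediate from its definition since $\|\cdot\|_{\X}$, $|\cdot|_{q}^{q}$ and $|\cdot|_{p}^{p}$ are all even functionals. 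Combined with $\gamma(\I^{-\varepsilon})\geq k$, this gives $\I^{-\varepsilon}\in\Gamma_{k}$.

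Using $\I^{-\varepsilon}$ as competitor in the minimax expression for $c_{k}$ then immediately yields
\begin{equation*}
c_{k}=\inf_{C\in\Gamma_{k}}\sup_{U\in C}\I(U)\leq \sup_{U\in \I^{-\varepsilon}}\I(U)\leq -\varepsilon<0,
\end{equation*}
which is the claim.

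There is no serious obstacle here; the argument is purely a packaging step. The only point deserving care is the verification that $\I^{-\varepsilon}$ is a legitimate member of $\Gamma_{k}$, i.e. that it is closed and symmetric, both of which are immediate from the construction of $\I$. The substantive content, namely the existence of a symmetric closed set of genus at least $k$ on which $\I$ is uniformly negative, has already been established in Lemma \ref{lem7.3} via the finite-dimensional subspaces $\mathbb{V}_{k}$ and the norm equivalence \eqref{eqnorm}.
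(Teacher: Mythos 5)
Your proof is correct and follows essentially the same route as the paper: invoke Lemma \ref{lem7.3} to obtain a sublevel set $\I^{-\e}$ of genus at least $k$, check that it lies in $\Gamma_{k}$, and use it as a competitor in the infimum to conclude $c_{k}\leq -\e<0$. The paper additionally remarks that $c_{k}>-\infty$ (a consequence of the coercivity of $\I$), but this is not needed for the literal statement.
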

\begin{proof}
By Lemma \ref{lem7.3}, for each $k\in \N$ there exists $\e=\e(k)>0$ such that $\gamma(\I^{-\e})\geq k$. Moreover, $0\notin \I^{-\e}$ and $\I^{-\e}\in \Gamma_{k}$. On the other hand, $\sup_{U\in \I^{-\e}}\I(U)\leq -\e$. Consequently,
$$
-\infty<c_{k}=\inf_{C\in \Gamma_{k}} \sup_{U\in C} \I(u)\leq \sup_{U\in \I^{-\e}} \I(u)\leq -\e<0.
$$
\end{proof}

\begin{lem}\label{lem7.5}
If $c=c_{k}=c_{k+1}=\dots=c_{k+r}$ for some $r\in \N$, then there exists $\lambda^{*}>0$ such that
$$
\gamma(K_{c})\geq r+1 \quad \forall \lambda\in (0, \lambda^{*}).
$$
\end{lem}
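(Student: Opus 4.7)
The plan is to adapt a classical Lusternik–Schnirelmann multiplicity argument, which is the standard tool for such genus equalities, using the local Palais–Smale condition furnished by Lemma \ref{lem7.2}. The hypothesis $c=c_k$ together with Lemma \ref{lem7.4} gives $c<0$, which is crucial because this is precisely the range in which $\mathcal{I}$ has good compactness properties. Throughout, $\lambda^*>0$ must be chosen small enough so that Lemma \ref{lem7.1} (and hence Lemma \ref{lem7.2}) applies, and this is what allows us to run the deformation argument.

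First I would check that $K_c$ is compact. Let $\{U_n\}\subset K_c$: then $\mathcal{I}(U_n)=c<0$ and $\mathcal{I}'(U_n)=0$, so Lemma \ref{lem7.2} gives, up to a subsequence, $U_n\to U$ in $\mathbb{X}^{s}_{2\pi}$. By continuity of $\mathcal{I}$ and $\mathcal{I}'$, $U\in K_c$, so $K_c$ is sequentially compact. Moreover $0\notin K_c$ since $\mathcal{I}(0)=0>c$, and $K_c=-K_c$ because $\mathcal{I}$ is even, so $K_c\in\mathcal{U}$.

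Next I argue by contradiction. Suppose $\gamma(K_c)\leq r$. By Proposition \ref{prop3.1}(v), there exists $\delta>0$ such that $N_\delta(K_c)\in\mathcal{U}$ and $\gamma(N_\delta(K_c))=\gamma(K_c)\leq r$. Because $\mathcal{I}$ is even, satisfies the Palais–Smale condition at the level $c<0$, and $K_c$ is compact, the standard equivariant deformation lemma produces $\varepsilon\in(0,-c/2)$ and an odd continuous map $\eta:\mathbb{X}^{s}_{2\pi}\to\mathbb{X}^{s}_{2\pi}$ with
\begin{equation*}
\eta\bigl(\mathcal{I}^{c+\varepsilon}\setminus N_\delta(K_c)\bigr)\subset \mathcal{I}^{c-\varepsilon}.
\end{equation*}
By definition of $c_{k+r}=c$, we can pick $A\in\Gamma_{k+r}$ with $\sup_{A}\mathcal{I}<c+\varepsilon$, so that $A\subset\mathcal{I}^{c+\varepsilon}$. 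Set $B:=\overline{A\setminus N_\delta(K_c)}$. The set $B$ is closed and symmetric, and Proposition \ref{prop3.1}(iii) gives
\begin{equation*}
\gamma(B)\geq \gamma(A)-\gamma(N_\delta(K_c))\geq (k+r)-r=k,
\end{equation*}
i.e.\ $B\in\Gamma_k$. Since $\eta$ is odd and continuous, $\eta(B)$ is also closed, symmetric and, by Proposition \ref{prop3.1}(i), $\gamma(\eta(B))\geq \gamma(B)\geq k$, so $\eta(B)\in\Gamma_k$. But $B\subset \mathcal{I}^{c+\varepsilon}\setminus N_\delta(K_c)$, whence $\eta(B)\subset\mathcal{I}^{c-\varepsilon}$, and
\begin{equation*}
c_k\leq \sup_{U\in\eta(B)}\mathcal{I}(U)\leq c-\varepsilon<c=c_k,
\end{equation*}
a contradiction.

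The main obstacle is making sure the deformation lemma is legitimately available; this is where the assumption $\lambda\in(0,\lambda^*)$ is used, with $\lambda^*$ chosen so that the threshold in Lemma \ref{lem7.1} stays positive (as in Remark after Lemma \ref{lem7.1}) and the local Palais–Smale condition of Lemma \ref{lem7.2} holds at every negative level, in particular at $c=c_k<0$. Once compactness of $K_c$ and PS at level $c$ are secured, the genus computation is purely mechanical via Proposition \ref{prop3.1}.
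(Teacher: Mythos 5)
Your proof is correct and takes essentially the same approach as the paper: both argue by contradiction via an equivariant deformation theorem (the paper cites Benci) together with the subadditivity and monotonicity of the Krasnoselskii genus, after observing that $K_c$ is compact thanks to the local Palais--Smale condition at levels $c<0$ for $\lambda$ small. The only minor imprecision is that $B=\overline{A\setminus N_\delta(K_c)}$ need not be disjoint from $N_\delta(K_c)$, so the inclusion $\eta(B)\subset\mathcal{I}^{c-\varepsilon}$ should be deduced from $\eta(A\setminus N_\delta(K_c))\subset\mathcal{I}^{c-\varepsilon}$ using continuity of $\eta$ and closedness of $\mathcal{I}^{c-\varepsilon}$, which is exactly what the paper's $\gamma(\eta(\overline{A-U}))$ step implicitly relies on.
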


\begin{proof}
From $c=c_{k}=c_{k+1}=\dots = c_{k+r}$, Lemma \ref{lem7.1} and Lemma \ref{lem7.4}, it follows that $\I$ satisfies the Palais-Smale condition in $K_{c}$, and it is easy to see that $K_{c}$ is a compact set. Moreover, $K_{c}=-K_{c}$. 

If by contradiction $\gamma(K_{c})\leq r$, then there exists a closed and symmetric set $U$, with $K_{c}\subset U$, such that $\gamma(U)= \gamma(K_{c})\leq r$. Note that we can choose $U\subset \I^{0}$ thanks to $c<0$. By Theorem 3.4 in \cite{Benci}, we can find an odd homeomorphism $\eta: \X\rightarrow \X$ such that $\eta(\I^{c+\delta}- U)\subset \I^{c-\delta}$, for some $\delta\in (0, -c)$. Thus, $\I^{c+\delta}\subset \I^{0}$, and by definition of $c=c_{k+r}$, there exists $A\in \Gamma_{k+r}$ such that $\sup_{u\in A} \I(u)<c+\delta$, that is, $A\subset \I^{c+\delta}$, and 
\begin{align}\label{7.3}
\eta(A-U)\subset \eta(\I^{c+\delta} -U)\subset \I^{c-\delta}.
\end{align} 
However, by Proposition \ref{prop3.1}, $\gamma(\overline{A-U})\geq \gamma(A)-\gamma(U)\geq k$, and $\gamma(\eta(\overline{A-U}))\geq \gamma(\overline{A-U})\geq k$. Then, $\eta(\overline{A-U})\in \Gamma_{k}$ and this is impossible because of \eqref{7.3}.
\end{proof}
\noindent
In the light of the previous lemmas, we are ready to give the proof of the main result of this section.
\begin{proof}[Proof of Theorem \ref{thm4}]
If $-\infty<c_1<c_2<\dots<c_k<\dots<0$ and noting that each $c_k$ is a critical value of $\I$, we can obtain infinitely many critical points of $\I$, that is \eqref{R} admits infinitely many solutions in $\X$.
Moreover, if $c_k=c_{k+r}$, then $c=c_k=c_{k+1}=\dots=c_{k+r}$, and applying Lemma \ref{lem7.5} we can find $\lambda_{*}>0$ such that
$$
\gamma(K_c)\geq r+1\geq 2.
$$
By Proposition \ref{prop3.5}, $K_{c}$ has infinitely many points, that is \eqref{R} possesses infinitely many solutions.
\end{proof}

\noindent
{\bf Acknowledgments.}
The author would like to express his sincere gratitude to the referee for all insightful comments and valuable suggestions, which enabled to improve this version of the manuscript.

\end{document}